\documentclass[hidelinks,onefignum,onetabnum]{siamart251216}

\usepackage{lipsum}
\usepackage{amsfonts}
\usepackage{amsmath}
\usepackage{graphicx}
\usepackage{epstopdf}
\usepackage{verbatim}
\usepackage{multirow}
\usepackage{algorithmic}
\usepackage{subcaption}
\usepackage{bm}
\usepackage{soul}
\usepackage{tikz}
\usetikzlibrary{calc}
\newcommand{\bn}{\bm{n}}
\newcommand{\bx}{\bm{x}}
\newcommand{\by}{\bm{y}}
\usepackage[shortlabels]{enumitem}
\crefname{assumptionsi}{Assumption}{Assumptions}
\Crefname{assumptionsi}{Assumption}{Assumptions}
\ifpdf
  \DeclareGraphicsExtensions{.eps,.pdf,.png,.jpg}
\else
  \DeclareGraphicsExtensions{.eps}
\fi
\usepackage{setspace}
\usepackage{placeins}

\newsiamremark{remark}{Remark}
\newsiamremark{hypothesis}{Hypothesis}
\crefname{hypothesis}{Hypothesis}{Hypotheses}
\newsiamthm{claim}{Claim}

\newlist{assumptions}{enumerate}{1}
\setlist[assumptions]{label=A\arabic*., ref=A\arabic*}

\headers{Discontinuous Galerkin method for VEM}{J. G. Calvo and C. Herrera}

\title{A two-level preconditioner for discontinuous virtual element methods with irregular subdomains\thanks{Submitted to the editors DATE.
}}

\author{Juan G. Calvo\thanks{Escuela de Matemática - CIMPA, Universidad de Costa Rica
  (\email{juan.calvo@ucr.ac.cr}).}
\and César Herrera\thanks{Department of Mathematics, Purdue University
  (\email{herre125@purdue.edu}).}
}

\usepackage{amsopn}

\ifpdf
\hypersetup{
  pdftitle={A two-level preconditioner for discontinuous virtual element methods with irregular subdomains},
  pdfauthor={J. G. Calvo, and C. Herrera}
}
\fi

\begin{document}

\maketitle

\begin{abstract}
We introduce and analyze a two-level overlapping additive Schwarz preconditioner for discontinuous Galerkin discretizations of virtual element spaces on general polygonal meshes in two dimensions, for a second-order elliptic problem whose coefficient is constant on each subdomain and may have large jumps across the interface. The discretization is a weighted symmetric interior penalty formulation: the averages carry the coefficient and the penalty is scaled by its harmonic mean across each edge, which keeps the method consistent and makes the constants of the analysis independent of the coefficient. The coarse space is obtained from a virtual, continuous partition of unity with a single degree of freedom per subdomain vertex, and is therefore applicable to the irregular subdomains produced by a graph partitioner; the discrete harmonic extension used to build it may be replaced by a cheaper projection onto polynomials of low degree. We prove that the condition number of the preconditioned system is bounded by $C(1+\log(H/h))(1+H/\delta)$, where $H$, $h$ and $\delta$ denote the subdomain diameter, the mesh size and the overlap, and where $C$ is independent of the number of subdomains and of the contrast of the coefficient, provided the latter is quasi-monotone. This is the same bound known for the conforming case and the analysis allows for hanging nodes. We also show, by an explicit example, that the quasi-monotonicity assumption cannot be dispensed with. Numerical experiments with structured triangular, hexagonal and Voronoi meshes, and with subdomains obtained both by uniform partitioning and by METIS, confirm the theory.
\end{abstract}

\begin{keywords}
  Discontinuous Galerkin Methods, Virtual Element Methods, Overlapping Schwarz Methods, Domain Decomposition
\end{keywords}

\begin{AMS}
  65N30, 65N55, 65F08, 65N22
\end{AMS}

\section{Introduction}
In this paper we introduce a preconditioner suitable for linear systems that arise from the discretization of partial differential equations with discontinuous Galerkin methods (DGM) for virtual element methods (VEM). Given a polygonal domain $\Omega \subset \mathbb{R}^2$, 
we focus on the following model problem:
\begin{equation}\label{problemToSolve}
    \left\{
    \begin{aligned}
    -\nabla \cdot ( \rho \nabla u) &= f && \text{in } \Omega,\\
    u &= 0 && \text{on } \partial\Omega,\\
    \end{aligned}
    \right.
\end{equation}
where $\rho$ is a given real function. We assume that $\rho\in L^\infty (\Omega)$, $f \in L^2(\Omega)$ and that there exists a positive constant $\rho_0$ such that $\rho(x)\geq \rho_0$ for all $x\in\Omega$. The weak formulation for problem \cref{problemToSolve} is then: Find $u\in H_0^1(\Omega)$ such that 
\begin{equation} \label{eq:weakForm}
    a(u,v):=\int_\Omega \rho \nabla u\cdot \nabla v = \int_\Omega fv =: (f,v)_0\ \quad \forall v\in H_0^1(\Omega).
\end{equation}
It is well known that problem \cref{eq:weakForm} is well-posed; see, e.g., \cite{brenner}.

VEM generalize the finite element framework to polygonal meshes, accommodating non-convex elements, elements with an arbitrary number of edges, and hanging nodes, while avoiding the explicit construction of basis functions; see \cite{AHMAD2013376,basicVEM,basisVEM2}. DGM, on the other hand, dispense with the requirement of interelement continuity, enforcing it weakly through penalty terms defined on element interfaces; see \cite{dgmElliptic,MR2431403}. Their combination is therefore particularly appealing, as both approaches are designed to handle mesh configurations beyond the scope of standard conforming finite elements. The resulting discontinuous virtual element discretization of \cref{problemToSolve} was introduced by Brezzi and Marini \cite{brezzi} for the specific case $\rho=1$. The formulation considered in this work follows their construction, with one important modification: we employ weighted averages and scale the penalty parameter using the harmonic mean of the coefficient across each edge. This choice preserves consistency while ensuring that the constants appearing in the analysis remain independent of the coefficient, even when $\rho$ is discontinuous across subdomains and with high variations on its magnitude.

The linear systems resulting from these discretizations are ill-conditioned and the two-level overlapping additive Schwarz methods provide a well-established approach in this context. Their application to DGM discretizations was investigated in \cite{Feng-Karakashian,10.1007/s10915-017-0453-4}. In these works, the coarse space is constructed from a triangular coarse space and this requirement can be restrictive in practical settings. For instance, when subdomains are generated using graph partitioning algorithms such as METIS, they consist of unions of fine elements with potentially complex geometries. The virtual coarse space proposed in \cite{CalvoJ,CalvoVEMSchwarz} overcomes this limitation in the conforming setting by constructing coarse basis functions from degrees of freedom prescribed along subdomain boundaries, rather than relying on coarse triangles. A two-level method for DGM on triangular meshes was also considered in \cite{CalvoSolano} for irregular subdomains. 
The linear systems resulting from these discretizations are ill-conditioned and the two-level overlapping additive Schwarz methods provide a well-established approach in this context. Their application to DGM discretizations was investigated in \cite{Feng-Karakashian,10.1007/s10915-017-0453-4}. In these works, the coarse space is constructed from a triangular coarse space and this requirement can be restrictive in practical settings. For instance, when subdomains are generated using graph partitioning algorithms, they consist of unions of fine elements with potentially irregular geometries. The virtual coarse space proposed in \cite{CalvoJ,CalvoVEMSchwarz} overcomes this limitation in the conforming setting by constructing coarse basis functions from degrees of freedom prescribed along subdomain boundaries, rather than relying on coarse triangles. A two-level method for DGM on triangular meshes was also considered in \cite{CalvoSolano} for irregular subdomains. 

Our main contributions can be summarized as follows: (i) We develop a coarse space for discontinuous VEM that is suitable for irregular subdomains, derived from the virtual partition of unity introduced in \cite{CalvoJ} and approximated in $V_h$ using the projectors of the virtual element method. (ii) We adopt a continuous coarse space across interfaces with exactly one degree of freedom associated with each subdomain vertex, limiting the size of the coarse problem with similar results as discontinuous alternatives with coarse systems three to four times larger. (iii) We establish a condition number bound similar as the conforming case, with a constant that is independent of both the number of subdomains and the coefficient contrast, under the assumption that $\rho$ is quasi-monotone. This shows that the discontinuous discretization does not introduce an additional condition number dependence. (iv) Contrast robustness relies on $\rho$-weighted averaging in both the continuous approximation and the discrete formulation employed in the analysis. (v) We further verify that the quasi-monotonicity assumption is essential for a constrast-independent bound. (vi) The analysis applies to general polygonal meshes and, in particular, accommodates meshes with hanging nodes. (vii) We avoid discrete harmonic extensions by projecting virtual functions into polynomial spaces of modest dimension, saving computational time.

The paper is organized as follows. In \Cref{section:Notation}, we introduce some notation and the usual discontinuous discretization for problem \cref{problemToSolve}. The VEM and its discontinuous version are then described in \Cref{section:DGM+VEM}. The two level overlapping Schwarz method is then introduced in \Cref{section:2DOS}, where we state our main result for the condition number of the preconditioned system. The technical tools and the proof of the main result are included in \Cref{tech_tools}, and experimental results are shown in \Cref{section:experiments}. Finally, some conclusions and final remarks are reported in \Cref{section:conclusions}.

\section{Discontinuous Galerkin Methods} \label{section:Notation}
This section introduces the notation and assumptions adopted throughout the article and reviews the standard DGM; for a detailed introduction, we refer to \cite{dgmElliptic,MR2431403}.

\subsection{Notation and assumptions}
Given a bounded domain $\mathcal{O}\subset\mathbb{R}^2$, we adopt the standard notation for the Lebesgue space $L^2(\mathcal{O})$ with norm $\|\cdot\|_{L^2(\mathcal{O})}$, and Sobolev spaces $H^s(\mathcal{O})$, $s\in\mathbb{R}$, with norm $\|\cdot\|_{H^s(\mathcal{O})}$ and semi-norm $|\cdot|_{H^s(\mathcal{O})}$. Additionally, $(\cdot, \cdot)_{{\cal O}}$ denotes the usual inner product in $L^2({\cal O})$ and the space of polynomials of degree at most $k$ defined on ${\cal O}$ is denoted by $\mathcal{P}_k({\cal O})$.

Let $\{\mathcal{T}_h\}_h>0$ be a family of decompositions of $\Omega$ into polygonal elements. A member of this family is called a \textit{fine mesh}. For each element $E\in\mathcal{T}_h$, we denote by $d_E$ and $h_E$ its number of edges and its diameter. Referring to the sides of elements $E$, we define $\mathcal{E}^I$ and $\mathcal{E}^B$ to be the set of all interior and boundary edges, respectively. For each $e \in \mathcal{E}^I$, we denote by $E^+$ and $E^-$ the two elements such that $e \in \partial E^{+} \cap \partial E^{-}$. If $e \in \mathcal{E}^B$ then $e \subset \partial E^+ \cap \partial \Omega$. Let ${\cal E}_h := {\cal E}^I \cup {\cal E}^B$. For $e \in {\cal E}_h $, we set $\langle u, v \rangle_e := \displaystyle \int_e uvds$.

The set of nodes contains the vertices of all the elements $E \in \mathcal{T}_h$ in the interior of $\Omega$ and is represented by $S_\mathcal{N}$, while the set $S_{\mathcal{N}}^E$ includes the nodes of
$\mathcal{T}_h$ that are on the boundary of $E$ and the interior of $\Omega$. We note that the set $S_{\mathcal{N}}^E$ can contain hanging nodes, i.e., vertices of other polygons that lie on an edge of $E$. For every node $\boldsymbol{x} \in S_{\mathcal{N}}$, the set $\mathcal{T}_{\boldsymbol{x}, h}$ is the subset of ${\cal T}_{h}$ containing all the elements $E$ such that $\boldsymbol{x} \in \partial E$. We split the nodes as $S_{\mathcal{N}}=\mathcal{N}_v\cup\mathcal{N}_b$, where $\mathcal{N}_v$ denotes the interior nodes and $\mathcal{N}_b$ the nodes lying on $\partial\Omega$.

Furthermore, the domain $\Omega$ is partitioned into $N$ non-overlapping and simply connected subdomains $\{\Omega_i\}^N_{i=1}$ of diameter $H_i$ which are the union of elements of the decomposition $\mathcal{T}_h$. The partition formed by $\{\Omega_i\}^N_{i=1}$ is called the \textit{coarse mesh} and is denoted by $\mathcal{T}_H$. In a similar way, the set of nodes of $\mathcal{T}_H$ contains the vertices of all the subdomains $\Omega_i$ in the interior of $\Omega$ and is represented by $S_{\mathcal{N}}^H$, while
the set $S_{\mathcal{N}}^{\Omega_i}$ includes only the nodes of $\mathcal{T}_H$ that are on the boundary of $\Omega_i$ and not on $\partial \Omega$. Again, hanging nodes are allowed on the coarse mesh. We then construct overlapping subdomains $\Omega'_i \supset \Omega_i$ by adding layers of elements that are external to $\Omega_i$, and we denote by $\delta_i$ the maximum width of the region $\Omega'_i \backslash \Omega_i$. We also assume that $\rho$ is constant on each subdomain and write $\rho_i:=\rho|_{\Omega_i}$.

We also partition the set of nodes $S_\mathcal{N}$ into disjoint sets. The nodes that belong to exactly one subdomain $\Omega_i$ are its interior nodes. A subdomain edge is defined as the interior of the intersection of the closure of two neighboring subdomains $\Omega_i$ and $\Omega_j$. If such intersection has more than one component, each open component is considered as a subdomain edge. Then, the endpoint nodes of the subdomain edges belong to the set of subdomain vertices, which is denoted by $S_\mathcal{V}$. The interface of the decomposition includes the closure of all the subdomain edges.

We conclude this section by stating the assumptions that we require in this work. It is assumed that every partition $\mathcal{T}_h$ satisfies the following properties: 
\begin{assumptions}
    \item \label{ass:A1} Its elements are shape regular: there exists $\gamma>0$ such that every $E\in\mathcal{T}_h$ is star-shaped with respect to a ball of radius greater than or equal to $\gamma h_E$.
    \item \label{ass:A3} There exists $\gamma'>0$ such that, for every $E\in\mathcal{T}_h$, the distance between any two nodes of $S^E_{\mathcal{N}}$ is greater than or equal to $\gamma' h_E$. In particular $|e|\approx h_E$ for every edge $e\subset\partial E$ and $n_0:=\max_{E\in\mathcal{T}_h}d_E$ is finite and bounded in terms of $\gamma$ and $\gamma'$ alone.
    \item \label{ass:A2} It is locally quasi-uniform: if two elements $E_i$ and $E_j$ are adjacent, then $h_{E_i} \approx h_{E_j}$.
\end{assumptions}
Moreover, we have the following assumption for $\rho$:
\begin{assumptions}[start=4]
\item \label{ass:monotonoCoef} The coefficient $\rho$ is quasi-monotone in the sense of \cite{DryjaSarkisWidlund}, adapted to the present two-dimensional setting as follows: for every interior node $\bx\in S_\mathcal{N}$, let $E^{\max}_{\bx}\in \mathcal{T}_{\boldsymbol{x}, h}$ satisfy $\rho_{E^{\max}_{\bx}}=\max_{E\in \mathcal{T}_{\boldsymbol{x}, h}}\rho_E$. Then, every $E\in \mathcal{T}_{\boldsymbol{x}, h}$ can be joined to $E^{\max}_{\bx}$ by a path $E=F_1,F_2,\ldots,F_L=E^{\max}_{\bx}$ of elements of $\mathcal{T}_{\boldsymbol{x}, h}$ in which consecutive elements share an edge and $\rho_{F_1}\leq\rho_{F_2}\leq\cdots\leq\rho_{F_L}$. For every boundary node $\bx$ the same holds with $E^{\max}_{\bx}$ replaced by some element of $\mathcal{T}_{\boldsymbol{x}, h}$ having an edge on $\partial\Omega$. 
\end{assumptions}
\begin{remark}
    Quasi-monotonicity was introduced in \cite{DryjaSarkisWidlund} for substructures in three dimensions. The two usual variants, according to whether consecutive substructures along the path are required to share a face or only an edge, are stated as \cite[Assumptions 4.4 and 4.5]{DohrmannWidlundRGDSW}. This hypothesis is used in our analysis only in \cref{N_term_rho} to obtain a bound that is independent of the contrast.
\end{remark}

\subsection{Discontinuous Galerkin Methods} This section presents the standard DGM discretization for triangular meshes; see \cite[Chapter 2]{MR2431403} for additional details. The variational discontinuous formulation for problem \cref{problemToSolve} is defined in the broken Sobolev space \( H^s(\mathcal{T}_h) := \prod_{E \in \mathcal{T}_h} H^s(E) \). We regard each element \( E \) as an open set, so that functions in \( H^s(\mathcal{T}_h) \) are single-valued. For $v \in H^s(\mathcal{T}_h)$, $s \geq 1$ and $e \in \mathcal{E}^I$, $v^{+}_e$ and $v^{-}_e$ denote the trace on $e$ of the restriction of $v$ to $E^{+}$ and $E^{-}$, respectively. The unit normal vector oriented from $E^+$ to $E^-$ is denoted by $\bn_e$. We write $\rho^{\pm}=\rho|_{E^{\pm}}$. Define the weights $w_e^\pm$ and the harmonic mean $\rho_e$ of the coefficient across $e$ by
\begin{equation} \label{eq:harmonicMean}
    w_e^+=\frac{\rho^{-}}{\rho^++\rho^-},\quad w_e^-=\frac{\rho^{+}}{\rho^++\rho^-}, \quad \rho_e=\dfrac{2\rho^+\rho^-}{\rho^++\rho^-},
\end{equation}
with the convention $\rho_e=\rho^+$ for $e\in\mathcal{E}^B$. In addition, we define the jump and the weighted average of $v$ on $e$ as
\begin{align*}
    [v] = \left\{\begin{array}{ll}
     v_e^{+} - v^{-}_e, & e\in \mathcal{E}^I,\\[2mm]
    v_e^+, & e \in \mathcal{E}^B,
    \end{array}\right. \quad 
    \{v\}_w = \left\{\begin{array}{ll}
     w_e^{+}v_e^{+} + w_e^{-}v^{-}_e, & e\in \mathcal{E}^I,\\[2mm]
    v_e^+, & e \in \mathcal{E}^B.
    \end{array}\right.
\end{align*}
We remark that for $e\in\mathcal{E}^I$ it holds that
\begin{equation} \label{eq:propW}
    w_e^+\rho^+ = w_e^-\rho^- = \frac{\rho_e}{2},\quad \text{and }\quad \min\{\rho^+,\rho^-\}\leq \rho_e\leq 2\min\{\rho^+,\rho^-\}. 
\end{equation}
Therefore, we have that the weighted average of the flux is 
\begin{equation} \label{eq:averageWe}
    \{\rho\nabla v\cdot\bn_e\}_w = \dfrac{\rho_e}{2}\left(\nabla v^+\cdot\bn_e+\nabla v^-\cdot\bn_e\right).
\end{equation}
For $s>3/2$, consider the bilinear form $A_h(\cdot, \cdot): H^s(\mathcal{T}_h) \times H^s(\mathcal{T}_h) \rightarrow \mathbb{R}$ given by
\begin{align*}
A_h(u, v) &= a_h(u, v) + b_{h}(u, v), \text{ where } a_h(u, v) = \sum_{E \in \mathcal{T}_h}(\rho\nabla u, \nabla v)_{E},\\
b_h(u, v) &= -\sum_{e \in \mathcal{E}_h} \biggl(\langle\{\rho  \nabla u \cdot \bn_e\}_w, [v]\rangle_{e} + \langle \{\rho\nabla v \cdot \bn_e\}_w, [u]\rangle_{e}\biggr)+\sum_{e \in \mathcal{E}_h}\frac{\gamma_h \rho_e}{|e|}\langle [u],[v]\rangle_e ,
\end{align*}
and  $\gamma_h> 0 $ is a given stability constant. Consider the functional $F_h : H^s(\mathcal{T}_h) \rightarrow \mathbb{R}$ defined by $F_h(v) = \sum_{E \in {\cal T}_h}(f, v)_{E}$. 
Hence, the formulation arising from DGM for problem \cref{problemToSolve} is to find $u \in H^s(\mathcal{T}_h)$ such that
\begin{equation*}
A_h(u,v) = F_h(v) \quad \forall \: v \in H^s(\mathcal{T}_h);
\end{equation*}
we refer to \cite[Chapter 2]{MR2431403} for consistency, well-posedness, and error estimates for the case where the weights are $w_e^\pm=1/2$.

\begin{remark}
The formulation is consistent: if $u\in H^s(\mathcal{T}_h)\cap H^1_0(\Omega)$ solves \cref{problemToSolve}, then $[u]=0$ on every $e\in\mathcal{E}_h$ and, since $w_e^++w_e^-=1$ and the normal flux $\rho\nabla u\cdot\bn_e$ is continuous across $e$, $\{\rho\nabla u\cdot\bn_e\}_\omega=\rho\nabla u\cdot\bn_e$. Hence $A_h(u,v)=F_h(v)$ for all $v\in H^s(\mathcal{T}_h)$. 
\end{remark}

\begin{remark}
For the general case of mixed boundary conditions, the functional \( F_h \) changes, while the bilinear form \( A_h \) remains unchanged. More precisely, suppose the boundary \(\partial\Omega\) is partitioned into two disjoint subsets \(\Gamma_D\) and \(\Gamma_N\). We impose the conditions  
\[
u = g_D \text{ on } \Gamma_D, \quad \text{and} \quad \rho\nabla u\cdot\bn = g_N \text{ on } \Gamma_N,
\]
where \(g_D \in H^{1/2}(\Gamma_D)\) and \(g_N \in L^2(\Gamma_N)\).  
Let \(\mathcal{E}^D\) and \(\mathcal{E}^N\) denote the sets of boundary edges lying on \(\Gamma_D\) and \(\Gamma_N\), respectively. Then the functional \(F_h(v)\) takes the form  
\begin{equation*}
F_h(v) = \sum_{E \in \mathcal{T}_h} (f, v)_E  
+ \sum_{e \in \mathcal{E}^N} \langle g_N, v\rangle_e  
- \sum_{e \in \mathcal{E}^D} \left\langle g_D, \rho\nabla v \cdot\bn_e - \frac{\gamma_h \rho_e}{|e|}v\right\rangle_e.
\end{equation*}
\end{remark}

\section{Discontinuous virtual methods} \label{section:DGM+VEM}

This section presents a discontinuous discretization for problem \cref{problemToSolve}. The discretization utilizes the basic theory of VEM for general polygonal meshes \cite{AHMAD2013376, basicVEM, basisVEM2} and DGM \cite{dgmElliptic}; see \cite{brezzi} for an introduction to discontinuous VEM for the case $\rho=1$. 

\subsection{Virtual element methods}
Consider a polygonal mesh satisfying Assumptions \ref{ass:A1} and \ref{ass:A3}. Given a positive integer $k$, for each element  $E \in \mathcal{T}_h$, define the set $\mathcal{B}_k(\partial E) : = \{v \in C^0(\partial E) : v|_e \in \mathcal{P}_k(e) \quad \forall \:  e \subset \partial E\}$, where $e$ represents any edge on the boundary of $E$. The local virtual element space of order $k$ is then defined as 
\begin{equation*}\label{localVirtual}
    V^E_k :=\{v \in H^1(E) : v \in B_k(\partial E), \quad \Delta v \in {\cal P}_{k-2}(E)\}.
\end{equation*}
We note that we consider ${\cal P}_{-1}(D) = \{0\}$. Hence, a function in $V^E_1$ is continuous and piecewise linear on the boundary of $E$, harmonic in the interior, and it is completely determined by its values at the nodes in $S^E_\mathcal{N}$.

In general, it is not possible to compute $(\rho \nabla u, \nabla v)_E$ for $u, v \in V_k^{E}$. Therefore, it is required to define a projector onto a polynomial space. Define the operator $\Pi^{\nabla}_{E, k} : V^E_k \rightarrow \mathcal{P}_k(E) \subset V^E_{k}$, where $\tilde{v} := \Pi^{\nabla}_{E, k} v$ is the polynomial that satisfies 
\begin{equation*}
   \left\{\begin{array}{l} (\nabla \tilde{v}, \nabla q)_E = (\nabla v, \nabla q)_E \quad \forall \: q \in \mathcal{P}_k(E),\\P_0(\tilde{v}) = P_0(v).
   \end{array}
   \right.
\end{equation*}
Here, $P_0 : V_k^E \rightarrow \mathbb{R}$ is a projection operator onto constants, which is chosen as the nodal average if $k = 1$ or the mean value if $k \geq 2$. It follows from the definition of $\Pi^{\nabla}_{E, k}$ that
\begin{equation*} 
    a^E(u, v) = a^E(\Pi^{\nabla}_{E, k}u,\Pi^{\nabla}_{E, k}v) + a^E(u-\Pi^{\nabla}_{E, k}u,v-\Pi^{\nabla}_{E, k}v),
\end{equation*}
which justifies to choose the local bilinear form for VEM for functions $u, v \in V_h$ as 
\begin{align}\label{localBilinearForm}
    \hat{a}_h(u, v) :=& \sum_{E \in \mathcal{T}_h}\hat{a}^E_h(u, v) = \sum_{E \in \mathcal{T}_h} (\rho \nabla \Pi^\nabla_{E,k}u, \nabla \Pi^\nabla_{E,k}v)_E + S^{E}(u - \Pi^\nabla_{E,k}u, v - \Pi^\nabla_{E,k}v),
\end{align}
with $S^E$ an stabilizing term that satisfies 
\begin{equation} \label{eq:stabIneq}
    c_1 a^E(v,v)\leq S^E(v,v)\leq c_2 a^E(v,v)\quad \forall \:v\in V_k^E \: \text{ with } \: \Pi^\nabla_{E,k}v = 0,
\end{equation}
for some positive constants $c_1,c_2$ independent of $E$, $h_E$ and $\rho_E$. For the lowest order case, it is usually chosen as $S^E(u, v):=\rho_E \sum_{\bx \in S^E_{\mathcal{N}}}u(\bx)v(\bx)$. We refer to \cite{basicVEM,basisVEM2} for an introduction to VEM, well-posedness and error estimates of this problem.

\subsection{The virtual discontinuous space and discretization}Let $s>3/2$. For simplicity, we consider the global space of lowest-order discontinuous piecewise virtual element functions defined as $V_h := \{v \in H^{s}(\mathcal{T}_h) : v|_{E} \in V^E_1\quad\forall \: E \in \mathcal{T}_h\}$. 
Since no continuity is imposed between elements, $V_h$ is the product of the local spaces $V^E_1$ and a function of $V^E_1$ is determined by its values at the nodes of $S^E_{\mathcal{N}}$. Hence $K := \dim V_h=\sum_{E \in \mathcal{T}_h}\bigl|S^E_{\mathcal{N}}\bigr|$ 
and we have one degree of freedom for each pair $(E,\bx)$ with $E\in\mathcal{T}_h$ and $\bx\in S^E_{\mathcal{N}}$. Accordingly, we denote the canonical basis of $V_h$ by $\{\phi_{E,\bx}\}$, where $\phi_{E,\bx}$ vanishes outside $E$ and, on $E$, is the function of $V^E_1$ with nodal values $\phi_{E,\bx}(\by)=\delta_{\bx\by}$ for $\by\in S^E_{\mathcal{N}}$. 

For a function $u$ such that its restriction to every element $E$ is continuous, we can define the natural interpolant onto $V_h$ given by
\begin{equation*}\label{Interpolant}
I^h u = \sum_{E\in\mathcal{T}_h}\ \sum_{\bx\in S^E_{\mathcal{N}}} u|_E(\bx)\,\phi_{E,\bx},
\end{equation*}
so that $(I^hu)|_E$ is determined by the trace of $u$ on $\partial E$ taken from within $E$; in particular $I^hu=u$ for every $u\in V_h$. Let 
\begin{align*}
    \hat{b}_h(u, v) =& -  \sum_{e \in \mathcal{E}_h}\left(\langle \ \{\rho\nabla \Pi^{\nabla}_{h, 1}  u \cdot \bn_e\}_w, [v]\rangle_{e} + \langle  \{\rho \nabla \Pi^{\nabla}_{h, 1} v \cdot \bn_e\}_w, [u]\rangle_{e} \right)\\
    &\quad+\sum_{e \in \mathcal{E}_h}\frac{\gamma_h\rho_e }{|e|}\langle [u],[v] \rangle_e, 
\end{align*}
where $\Pi^{\nabla}_{h, 1}$ is the operator which, on each element $E$, coincides with $\Pi^{\nabla}_{E, 1}$; i.e., for $u\in H^s({\cal T}_h)$, $\Pi^{\nabla}_{h, 1}u$ is a discontinuous piecewise linear polynomial. Similarly as in \cite{brezzi} (but with weighted averages instead of aritmetic averages), consider the discrete bilinear form $B_h(\cdot, \cdot): H^s({\cal T}_h) \times H^s({\cal T}_h) \rightarrow \mathbb{R}$ defined by
\begin{align} \label{eq:DVEM_form}
    B_h(u, v) := \hat{a}_h(u, v) + \hat{b}_h(u, v).
\end{align}

Finally, the discrete discontinuous virtual formulation for problem \cref{problemToSolve} is: find $u_h \in V_h$ such that 
\begin{equation}\label{problemDiscret}
    B_h(u_h, v_h) = F_h(v_h) \quad \forall \: v_h \: \in \: V_h.
\end{equation}

\subsection{Continuity and coercivity properties} This section shows that the bilinear form \cref{eq:DVEM_form} is coercive and continuous; hence, the discrete problem \cref{problemDiscret} is well-posed. Related results for DG discretizations of virtual element spaces   can be found in \cite[Theorem 6.2]{brezzi}. 
For $s > 3/2$, define the norm $\|\cdot \|_{1,h}:H^{s}(\mathcal{T}_h) \rightarrow \mathbb{R}$ by
\begin{equation} \label{eq:defNorm}
    \displaystyle \|v\|^2_{1,h} = \sum_{E \in \mathcal{T}_h}(\rho \nabla v,\nabla v)_{E}   + \sum_{e \in \mathcal{E}_h}\frac{\rho_e}{|e|}\langle[v],[v]\rangle_e.
\end{equation}

We use the following trace inequality:
\begin{equation}\label{trace_inequality}
     \|\nabla v\cdot \bn\|_{L^2(e)} \leq C h_E^{-1/2} \|\nabla v\|_{L^2(E)},\quad\forall \: v\in \mathcal{P}_1(E),\: \forall \:e\in\partial E,
\end{equation}
with $C=(\pi\gamma^2)^{-1}$, where $\gamma$ is the constant of Assumption \ref{ass:A1}.

\begin{lemma}\label{lem:auxCoercCont}
There exists $C>0$, depending on only on $\gamma$ and $n_0$ (see Assumptions \ref{ass:A1} and \ref{ass:A3}), such that for all $u,v\in H^s(\mathcal{T}_h)$, $s>3/2$,
\begin{equation*}
    \sum_{e\in\mathcal{E}_h} \langle \{\rho\nabla\Pi_{h,1}^\nabla u\cdot\bn_e\}_w ,[v] \rangle_e \leq C\left(\sum_{e\in\mathcal{E}_h} \frac{\rho_e}{|e|}\|[v]\|_{L^2(e)} ^2\right)^{1/2} \left(\sum_{E\in\mathcal{T}_h} \rho_E \|\nabla v \|_{L^2(E)}^2\right)^{1/2}.
\end{equation*}
In particular, the left-hand side is bounded by $C\|u\|_{1,h}\|v\|_{1,h}.$
\begin{proof}
    For $e\in\mathcal{E}^I$, by \cref{eq:averageWe} and the triangle inequality,
    \begin{align*}
    \| \{\rho\nabla \Pi^{\nabla}_{h, 1}  v \cdot \bn_e\}_w\|_{L^2(e)} &\leq \frac{\rho_e}{2} \left( \|\nabla \Pi^{\nabla}_{E^+}  v \cdot \bn_e \|_{L^2(e)} + \| \nabla \Pi^{\nabla}_{E^-}  v \cdot \bn_e  \|_{L^2(e)} \right).
    \end{align*}    
    Using \cref{trace_inequality}, $\rho_e\leq \sqrt{2}\rho_e^{1/2}(\rho^\pm)^{1/2}$, the stability of $\Pi_E^\nabla$ and Cauchy-Scharwz, we obtain
    \begin{align*}
    \| \{\rho\nabla \Pi^{\nabla}_{h, 1}  v \cdot \bn_e\}_w\|_{L^2(e)} 
    &\leq C\left(\frac{\rho_e}{|e|}\right)^{1/2} \left(\rho^+ \|\nabla v \|_{L^2(E^+)}^2 + \rho^-\| \nabla v\|_{L^2(E^-)}^2 \right)^{1/2}.
    \end{align*}
    A similar bound holds for $e\in \mathcal{E}^B$ with the only element $E^+$. Therefore, using again Cauchy-Schwarz, 
    \begin{align*}
        \sum_{e\in\mathcal{E}_h} &\langle \{\rho\nabla\Pi_{h,1}^\nabla u\cdot\bn_e\}_w, [v] \rangle_e \leq \sum_{e\in\mathcal{E}_h} \| \{\rho\nabla \Pi^{\nabla}_{h, 1} v\cdot \bn_e\}_w\|_{L^2(e)}\|[v]\|_{L^2(e)} \\
        &\leq C\sum_{e\in\mathcal{E}_h}\left(\frac{\rho_e}{|e|}\right)^{1/2} \|[v]\|_{L^2(e)}  \left(\rho^+ \|\nabla v \|_{L^2(E^+)}^2 + \rho^-\| \nabla v\|_{L^2(E^-)}^2 \right)^{1/2} \\
        &\leq C\left(\sum_{e\in\mathcal{E}_h} \frac{\rho_e}{|e|}\|[v]\|_{L^2(e)} ^2\right)^{1/2} \left(n_0\sum_{E\in\mathcal{T}_h} \rho_E \|\nabla v \|_{L^2(E)}^2\right)^{1/2},
    \end{align*}
    which proves the lemma. The second bound follows from \cref{eq:defNorm}.
\end{proof}
\end{lemma}
\begin{lemma}\label{Coercivity}
There exist positive constants $\gamma$ and $\alpha$ depending only on the shape of the elements in $\mathcal{T}_h$, such that if $\gamma_h \geq \gamma$, then
\begin{equation*}\label{continuity}
    B_h(v,v) \geq \alpha\|v\|^2_{1,h} \quad \forall \: v \in \: V_h.
\end{equation*}
\begin{proof}
We need to deduce a lower bound for
\begin{align*}
    B_h(v, v) = \hat{a}_h(v, v) -  2\sum_{e \in \mathcal{E}_h}\langle  \{\rho\nabla \Pi^{\nabla}_{h, 1}  v \cdot \bn_e\}_w, [v]\rangle_{e} +\sum_{e \in \mathcal{E}_h} \frac{\gamma_h\rho_e}{|e|}\langle [v],[v] \rangle_e.
\end{align*}
The first term is easily bounded using \cref{eq:stabIneq}, where we deduce that 
\begin{align*}
    \hat{a}_h(v, v)\geq \hat{C} \sum_{E\in\mathcal{T}_h} (\rho \nabla v,\nabla v)_E,
\end{align*}
where $\hat{C}:= \min\{1,c_1\}$, with $c_1>0$ defined in \cref{eq:stabIneq}. For the second term, we use \cref{lem:auxCoercCont} and the inequality $2ab\leq a^2/\delta + \delta b^2$ for $\delta>0$ to obtain
\begin{align*}
    2\sum_{e \in \mathcal{E}_h}\langle  \{\rho\nabla \Pi^{\nabla}_{h, 1}&  v \cdot \bn_e\}_w, [v]\rangle_{e} \leq \frac{C^2n_0}{\delta} \sum_{e \in \mathcal{E}_h} \frac{\rho_e}{|e|} \|[v]\|_{L^2(e)}^2 + \delta\sum_{E \in \mathcal{T}_h} \rho_E \|\nabla v \|_{L^2(E)}^2.
\end{align*}
Adding all terms, we get
\begin{align*}
     B_h(v, v) \geq \left(\hat{C}-\delta\right)\sum_{E \in \mathcal{T}_h} \rho_E \| \nabla v \|_{L^2(E)}^2 + \sum_{e \in \mathcal{E}_h} \left(\gamma_h- \frac{C^2 n_0}{\delta}\right)\frac{\rho_e}{|e|} \|[v]\|_{L^2(e)}^2.
\end{align*}
Taking $\delta = {\hat{C}}/2$, $\gamma_h \geq \dfrac{2C^2n_0}{\hat{C}} + \dfrac{\hat{C}}{2}$, we conclude that $B_h(v,v)\geq \dfrac{\hat{C}}{2} \|v\|_{1, h}$.
\end{proof}
\end{lemma}

\begin{lemma}\label{Continuity}
Given $u, v \in V_h$, there exists a positive constant $C$, depending only on $\gamma_h$, $n_0$ and the shape regularity of the elements, such that 
\begin{equation*}\label{ContInequality}
    |B_h(u, v)| \leq C\|u\|_{1,h}\|v\|_{1,h}.
\end{equation*}
\begin{proof}
We bound each term of $B_h=\hat{a}_h+\hat{b}_h$ separately. Fix $E\in\mathcal{T}_h$. From standard arguments of the VEM, $\hat{a}^E_h(u,v)\leq \max\{1,c_2\} a^E(u,u)^{1/2}a^E(v,v)^{1/2}$, where $c_2$ is defined in \cref{eq:stabIneq}. Adding over all the elements, it holds that 
\begin{equation}\label{eq_lem1}
|\hat{a}_h(u,v)|\leq \max\{1,c_2\} \|u\|_{1,h}\|v\|_{1,h}.
\end{equation}

Applying \cref{lem:auxCoercCont}, we bound directly
\begin{equation} \label{eq_lem2}
    \sum_{e \in \mathcal{E}_h}\langle \{\rho\nabla \Pi^{\nabla}_{h, 1}  u \cdot \bn_e\}, [v]\rangle_{e} + \langle  \{\rho \nabla \Pi^{\nabla}_{h, 1} v \cdot \bn_e\}, [u]\rangle_{e} \leq 2Cn_0^{1/2} \|u\|_{1,h} \|v\|_{1,h}.
\end{equation}

Finally, the penalty term is bounded by the Cauchy-Schwarz inequality 
\begin{align} \label{eq_lem3}
    \sum_{e \in \mathcal{E}_h}\frac{\gamma_h\rho_e }{|e|}\langle [u],[v] \rangle_e &\leq \gamma_h \left(\sum_{e \in \mathcal{E}_h}\frac{\rho_e }{|e|}\|[u]\|_{L^2(e)}^2\right)^{1/2}\left(\sum_{e \in \mathcal{E}_h}\frac{\rho_e }{|e|}\|[v]\|_{L^2(e)}^2\right)^{1/2}\nonumber\\
    &\leq \gamma_h\|u\|_{1,h}\|v\|_{1,h}.
\end{align}
\end{proof}
The lemma follows from \cref{eq_lem1}, \cref{eq_lem2} and \cref{eq_lem3}.
\end{lemma}

\section{Two level overlapping Schwarz methods} \label{section:2DOS}
In this section, we describe the two-level additive Schwarz methods; for a complete study, see \cite{toselli2006domain}.
We aim to construct a  preconditioner for the ill-conditioned linear system $B \mathbf{\lambda} = \mathbf{b}$ arising from \cref{problemDiscret}.
 
\subsection{Coarse spaces and irregular subdomains}
Consider the partition $\mathcal{T}_H$ formed by the subdomains $\{\Omega_i\}^N_{i=1}$. Then, the discontinuous global virtual space on $\mathcal{T}_H$ is defined as $V_0 := \{v \in H^{s}(\mathcal{T}_H) : v|_{{\Omega}_i} \in V_1^{\Omega_i},\quad 1\leq i \leq N\}$. We have that the dimension of $V_0$ is $\text{dim}V_0=\sum^N_{i=1}|S^{\Omega_i}_{\mathcal{N}}|$, since we have exactly one degree of freedom per every node of each element $\Omega_i$.

A reduced coarse space must be constructed within $V_0$. We consider functions that are continuous across interfaces and have a single degree of freedom associated with each subdomain vertex. The motivation for using a reduced coarse space is to keep the cost of the coarse solver small, mitigating the bottleneck associated with the coarse solver at each step of the preconditioner.

\subsection{Continuous reduced coarse space}For each subdomain vertex $\bx_0 \in S_{\mathcal{V}}$ we define a coarse continuous function $\psi^H_{\bx_0} \in V_0$ by choosing appropriately its degrees of freedom. We use the same construction as presented in \cite[Section 4.1]{CalvoJ}. Using  this construction, we obtain a continuous function $\psi^H_{\bx_0}$ on $\Omega$ such that $0 \leq \psi^H_{\bx_0} \leq 1$ and $\sum_{\bx_0}\psi^H_{\bx_0} = 1$.  We then define the \textit{continuous reduced coarse space} as the span of $\{\psi^H_{\bx_0}\}$, i.e., 
\begin{equation*}
    \displaystyle V^{R,C}_0 := \biggl\{ v \in V_0: v = \sum_{\bx_0 \in S_{\mathcal{V}}}\alpha_{\bx_0}\psi^H_{\bx_0}  \biggr\} \subset V_0,
\end{equation*}
where $\alpha_{\bx_0} \in \mathbb{R} \: \forall \: \bx_0 \in S_{\mathcal{V}}$. We note that the there is only one degree of freedom per subdomain vertex and it is independent of the subdomain to which it belongs. Also, we note that functions of $V^{R,C}_0$ are piecewise linear on the subdomain edges and harmonic in the interior of each subdomain. 

\begin{remark}\label{rem:discreteCoarse}
A discontinuous reduced coarse space $V^{R,D}_0$ can be defined in the same way by not identifying the degrees of freedom associated with a subdomain vertex across the subdomains sharing it. We have $V_0^{R,C}\subset V_0^{R,D}\subset V_0$. The analysis of this paper does not cover this variant. We show a single experiment with $V_0^{R,D}$, reported in \textcolor{red}{\cref{tab:discV0}}, and observed no gain: the coarse space is three to four times larger, while the condition numbers and the iteration counts are essentially the same as those obtained with $V_0^{R,C}$. There is a further advantage in the continuous choice. If the coarse solver is taken to be a discontinuous Galerkin problem posed on $\mathcal{T}_H$, with its own bilinear form and penalty parameter, then the fine and the coarse forms have to satisfy a compatibility relation; see \cite{Feng-Karakashian,10.1007/s10915-017-0453-4}. This is why we restrict our attention to the continuous space $V_0^{R,C}$ in what follows.
\end{remark}

\subsection{Extension operators}
We use the operator $R^T_0: V^{R, C}_0 \rightarrow V_h$ constructed in \cite{CalvoJ} to approximate functions in the coarse space by elements in $V_h$. Given an integer $k \geq 2$, let $V_k^i:=V_k^{\Omega_i}$ be the local virtual element subspace of order $k$ and let 
\begin{equation*}\label{eqn:defVk}
V^{k}_H:= \{v \in H_0^s(\mathcal{T}_H): v|_{\Omega_i} \in V^i_k \quad \forall \: 1 \leq i \leq N\}
\end{equation*}
be the global virtual space on $\mathcal{T}_H$ of degree $k$.

Clearly, $V_0^{R,C} \subseteq V_0 \subseteq V^k_H$. Thus, given a coarse function $\psi^H_{\bx_0}$, we can identify it as an element $\tilde{\psi}^H_{\bx_0}\in V_H^k$; see Section 4.2 from \cite{CalvoJ} for details. We then define the action of $R_0^T$ on each basis function of $V_0^{R,C}$. We note that $R_0^T$ depends on $k \geq 2$, but for simplicity we omit this dependence.  We set the degrees of freedom of $R_0^T \psi^H_{\bx_0} \in V_h$ as follows:
\begin{enumerate}[(a)]
    \item If $E \subset \Omega_i$ and $\bx \in S_{\cal N}^E$ is a node in the interior of $\Omega_i$, we set $(R_0^T \psi_{\bx^H_0})|_E(\bx):=(\Pi^{\nabla}_{\Omega_i , k}\tilde{\psi}_{\bx^H_0})(\bx)$.
    \item For all other nodes in $\mathcal{T}_h$ (that belong to the interface), we set $(R_0^T \psi_{\bx^H_0})(\bx):=\psi_{\bx^H_0}(\bx)$.
\end{enumerate}
Therefore, given $u_0 \in V_0^{R,C}$, we can write the restriction of $R_0^T$ to a subdomain $\Omega_i$ as \begin{equation*}\label{R0T_Omegai}
    \displaystyle (R_0^Tu_0)|_{\Omega_i} = I^h(\Pi^{\nabla}_{\Omega_i, k} \tilde{u}_0) + \sum_{\bx \in S^{\Omega_i}_{\mathcal{N}}}(u_0 - \Pi^{\nabla}_{\Omega_i, k} \tilde{u}_0)(\bx)\phi^h_{\bx}.
\end{equation*}
Notice that $R_0^Tu_0$ is continuous and $R_0^Tu_0 = u_0$ on $ \partial \Omega_i$. Finally, we consider the bilinear form defined in $V_0^{R,C} \times V_0^{R,C}$ by $\tilde{B}_{h,0}(u_0, v_0):= B_h(R_0^Tu_0, R_0^Tv_0)$.

\subsection{Local spaces}
For every $1\leq i \leq N$, let $\mathcal{T}_{h,i} : = \{E \in \mathcal{T}_h : E \subset \Omega'_i\}$. 
We consider the local discontinuous virtual spaces $V_i$, defined by 
\begin{equation*}
    V_i := \{v \in H^s_0(\Omega'_i): v|_E \in \mathcal{B}_1(\partial E), \: \Delta v|_E = 0  \quad \forall \: E \in \mathcal{T}_{h,i}\}.
\end{equation*}
Thus, per every element $E \in \mathcal{T}_{h,i}$ we have one degree of freedom for every node of this element in the interior of $\Omega'_i$. We also consider the natural operators $R^T_i : V_i \rightarrow V_h$ given the zero extension from the subdomain $\Omega'_i$ to $\Omega$, $1 \leq i \leq N$. We use exact solvers for the local spaces, i.e., we define the bilinear forms $\tilde{B}_{h,i} : V_i \times V_i \rightarrow \mathbb{R}$ given by $\tilde{B}_{h,i}(u_i, v_i) := B_h(R_i^Tu_i, R_i^Tv_i)$.

\subsection{Algorithm}

In this subsection we define an additive preconditioner for the linear system $B \mathbf{\lambda} = \mathbf{b}$ obtained from $\cref{problemDiscret}$. Let $R_i^T$ denote the matrix representation of the operators $R_i^T$. Let $\tilde{B}_i =  R_iBR_i^T$, $0 \leq i \leq N$, be the stiffness matrices, and then consider the  Schwarz projections $P_i =  R_i^T\tilde{B}^{-1}_iR_iB$, $0 \leq i \leq N$. The additive preconditioned operator is defined by 
\begin{equation*}
    P_{ad}:= \displaystyle\sum_{i=0}^NP_i = B_{ad}^{-1}B, \quad {\text{with}} \quad B^{-1}_{ad} = \sum_{i=0}^{N}R_{i}^T\tilde{B}^{-1}_{i}R_i.
\end{equation*}
Hence, for the preconditioned system $B^{-1}_{ad} B\mathbf{\lambda} = B_{ad}^{-1}\bm{b}$, the main theorem of this paper is presented below, with the proof provided in \Cref{tech_tools}.

\begin{theorem}\label{main_thm}
    The exists a constant $C$, independent of $N$, $H$, $h$, $\delta$ and $\rho$, such that the condition number of the preconditioned system satisfies 
    \begin{equation*}
        \kappa(B^{-1}_{ad}B) \leq C\left(1+\log\dfrac{H}{h}\right)\left(1+\dfrac{H}{\delta}\right),
    \end{equation*}
    where $H/h$ and $H/\delta$ denote their maximum value over all the subdomains.
\end{theorem}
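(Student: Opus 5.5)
The proof will use the abstract Schwarz framework (Toselli--Widlund). Since the local solvers are exact ($\tilde B_{h,i}=B_h$ on $R_i^TV_i$), we have $\omega=1$. By a coloring argument with finitely many overlaps, the strengthened Cauchy--Schwarz constant is bounded by $1+N_c$, where $N_c$ is the maximal number of overlapping subdomains containing a point. Hence $\lambda_{\max}(P_{ad})\le C$. It then suffices to prove a stable decomposition. For every $u\in V_h$ we need $u=R_0^Tu_0+\sum_{i\ge1}R_i^Tu_i$ with
\[
B_h(R_0^Tu_0,R_0^Tu_0)+\sum_{i=1}^N B_h(R_i^Tu_i,R_i^Tu_i)\le C\Bigl(1+\log\tfrac Hh\Bigr)\Bigl(1+\tfrac H\delta\Bigr)B_h(u,u).
\]
By \cref{Coercivity} and \cref{Continuity}, $B_h(\cdot,\cdot)$ may be replaced throughout by $\|\cdot\|_{1,h}^2$, so all estimates are carried out in the weighted broken norm \cref{eq:defNorm}.

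The first step is to build a continuous, contrast-robust approximation of $u$. Define a conforming companion $u^c=E_\rho u$, piecewise linear on $\partial E$ and harmonic on each $E$. At each node $\bx$, $u^c(\bx)$ is a $\rho$-weighted average of the values $u|_E(\bx)$ for $E\in\mathcal T_{\bx,h}$; concretely, take the value from $E^{\max}_{\bx}$, or the $\rho$-weighted mean, and set $u^c(\bx)=0$ on $\partial\Omega$. The key estimate is
\[
\sum_E\rho_E|u-u^c|^2_{H^1(E)}+\sum_E\rho_E h_E^{-2}\|u-u^c\|^2_{L^2(E)}\le C\sum_{e}\frac{\rho_e}{|e|}\|[u]\|^2_{L^2(e)}.
\]
The difference $u|_F(\bx)-u^c(\bx)$ is a telescoping sum of jumps along the quasi-monotone path $F=F_1,\dots,F_L=E^{\max}_{\bx}$ from Assumption \ref{ass:monotonoCoef}. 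Along this path $\rho_F\le\rho_{F_j}$, so $\rho_F\le 2\rho_{e_j}$ by \cref{eq:propW}, with $e_j$ the edge shared by $F_j$ and $F_{j+1}$. Each term is therefore controlled by the penalty term, uniformly in the contrast, and Assumptions \ref{ass:A1}--\ref{ass:A2} (VEM inverse and norm-equivalence estimates on $V_1^E$, including hanging nodes) convert nodal values into $H^1$ and $L^2$ norms. I expect this estimate to be the main obstacle: handling boundary nodes, hanging nodes, and the precise path argument is exactly where quasi-monotonicity enters, and it is presumably the content of the announced lemma \cref{N_term_rho}.

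Next I take the coarse component. Let $u_0=\sum_{\bx_0\in S_{\mathcal V}}\bar u^c_{\bx_0}\psi^H_{\bx_0}$, where $\bar u^c_{\bx_0}$ is an average of $u^c$ over the subdomain edges or a small patch adjacent to $\bx_0$, chosen on the subdomain of largest $\rho$ (or a $\rho$-weighted choice) so that the coarse interpolant is robust. From the conforming analysis in \cite{CalvoJ,CalvoVEMSchwarz} I import three ingredients: the discrete harmonic (or polynomial-projected) extension is stable; there is a bound $|\psi^H_{\bx_0}|^2_{H^1(\Omega_i)}\le C(1+\log\frac Hh)$; and there are Poincaré/trace estimates on irregular subdomains. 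Together these give
\[
\|R_0^Tu_0\|^2_{1,h}\le C(1+\log\tfrac Hh)\,|u^c|^2_{H^1_\rho},\qquad
\sum_i\rho_iH_i^{-2}\|u^c-R_0^Tu_0\|^2_{L^2(\Omega_i)}\le C(1+\log\tfrac Hh)|u^c|^2_{H^1_\rho}.
\]
Since $R_0^Tu_0$ is continuous, its norm contains no jump term. Quasi-monotonicity is again needed here to avoid contrast dependence; following \cite{DryjaSarkisWidlund}, one picks the vertex average from the subdomain of maximal coefficient.

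The local components are defined as $u_i=I^h(\theta_i(u-R_0^Tu_0))$, where $\{\theta_i\}$ is the standard partition of unity subordinate to $\{\Omega_i'\}$ with $|\nabla\theta_i|\le C/\delta_i$. The interpolant is taken elementwise, so $\sum_iu_i=u-R_0^Tu_0$. Write $w=u-R_0^Tu_0=(u-u^c)+(u^c-R_0^Tu_0)$. For each $E$ I bound $\rho_E|I^h(\theta_iw)|^2_{H^1(E)}$ by $C\rho_E\bigl(|w|^2_{H^1(E)}+\delta^{-2}\|w\|^2_{L^2(E)}\bigr)$. The jumps satisfy $[\theta_iw]\approx\theta_i[w]$ up to an $O(h/\delta)$ perturbation, and are bounded by the jump terms of $u$, since $R_0^Tu_0$ is continuous. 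The $\delta^{-2}$ term is only needed in the strip of width $\delta$ near $\partial\Omega_i'$. The piece from $u-u^c$ is handled by the first estimate, using $h\le\delta$. For the piece $u^c-R_0^Tu_0$, the standard thin-strip lemma gives $\|v\|^2_{L^2(\text{strip})}\le C\delta^2\bigl((1+\frac H\delta)|v|^2_{H^1}+\frac1{H\delta}\|v\|^2_{L^2}\bigr)$, which is valid for the irregular subdomains of \cite{CalvoJ}. Combined with the coarse estimate, this yields the factor $(1+\frac H\delta)(1+\log\frac Hh)$. Summing over $i$ with finite overlap gives the stable decomposition, hence $\lambda_{\min}(P_{ad})\ge C^{-1}(1+\log\frac Hh)^{-1}(1+\frac H\delta)^{-1}$, and \cref{main_thm} follows.
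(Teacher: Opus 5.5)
Your proposal is correct and has the same architecture as the paper's proof. The shared steps are: abstract Schwarz theory with coloring; a contrast-robust conforming companion obtained by telescoping jumps along the quasi-monotone path; the imported coarse estimates of \cite{CalvoJ}; and a partition of unity for the local parts. For the companion, the paper's $\chi$ of \cref{lemmachi} uses the $\rho$-weighted mean and reaches the path through the minimization property in \cref{N_term_rho}. Your alternative, taking the value from $E^{\max}_{\bx}$, works equally well.

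The differences are in the technical steps.

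At the coarse level the paper simply takes $u_0=\sum_{\bx_0}\chi(\bx_0)\psi^H_{\bx_0}$ and uses \cref{u-u0} as a black box. Because $\rho$ is constant on each subdomain and $\chi$ is continuous at $\bx_0$, every coarse estimate is local to a single subdomain and is therefore automatically contrast-robust. Quasi-monotonicity is thus used only once, at the fine level, and your max-coefficient averaging at the coarse level is unnecessary. Be careful with the ingredients you list there. A logarithm in $|\psi^H_{\bx_0}|^2_{H^1(\Omega_i)}$, combined with nodal or small-patch vertex values (which already carry the logarithm of \cref{eq:discreteSobolev}), would produce $(1+\log(H/h))^2$. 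The single factor $\omega_i$ is exactly what \cref{u-u0} delivers, so you should cite it directly rather than reassemble it.

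For the $\delta^{-2}$ term you apply the thin-strip lemma to $u^c-R_0^Tu_0$, feeding it both the $H^1$ and the $H_i^{-2}L^2$ coarse bounds. The paper instead splits $w$ as in \cref{eq:splitW} and treats the pieces separately:
\begin{itemize}
\item it bounds $\chi-u_0$ in $L^\infty$ through \cref{lem:discreteSobolev}, times the strip area $CH_i\delta_i$;
\item it handles $u_0-R_0^Tu_0$, which vanishes on $\partial\Omega_i$, by Friedrichs' inequality on patches of size $\delta_i$.
\end{itemize}
Both routes tacitly require some geometric regularity of the irregular subdomains: yours through the strip lemma, the paper's through the patch covering.

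Finally, the jump terms need no $O(h/\delta)$ perturbation argument. Since $\theta_i$ is single-valued at nodes, $[I^h(\theta_i w)](\bx)=\theta_i(\bx)[w](\bx)$ holds exactly. This gives \cref{lem:jumpInterp} with constant $3$.
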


\section{Technical tools} \label{tech_tools}
This section gathers some tools needed in the proof of \cref{main_thm}. We start with a generalization of \cite[Lemma 2.2]{10.2307/4101206}:


\begin{lemma}\label{N_term2}
    Let $\alpha = [\alpha_1,\ldots,\alpha_m]^T\in\mathbb{R}^m$, $\rho = [\rho_1,\ldots,\rho_m]^T\in\mathbb{R}_+^m$. Denote by $\rho_{ij}$ the harmonic mean \cref{eq:harmonicMean} of $\rho_i$ and $\rho_j$. Define the weighted average of $\alpha$ with weights $\rho$ by
    $$ \beta:=\dfrac{\sum_{j=1}^m \rho_j\alpha_j}{\sum_{j=1}^m \rho_j}.$$
    Then, there exists a constant $C>0$, depending on $m$ and possibly $\rho$, such that
    \begin{equation}\label{eq:ineqAverage}
    \sum_{j=1}^m \rho_j |\alpha_j-\beta|^2 \leq C\sum_{j=1}^{m} \rho_{j,j+1} |\alpha_{j+1}-\alpha_j|^2,
    \end{equation}
    where the indices are understood cyclically, that is, $j=m+1$ corresponds to $j=1$. 
    \begin{proof}
        Set $\gamma=\alpha-\beta\mathbf{1}$. Let $D_\rho$ be the $m\times m$ diagonal matrix $D_\rho = \operatorname{diag}(\rho)$. By the definition of $\beta$ we have $\mathbf{1}^TD_\rho\gamma=\mathbf{1}^TD_\rho\alpha-\beta\,\mathbf{1}^TD_\rho\mathbf{1}=0$. Let $L_\rho$ be the $m\times m$ matrix such that  $$\sum_{j=1}^{m}\rho_{j,j+1}\,|\gamma_{j+1}-\gamma_j|^2=\gamma^{T}L_\rho\,\gamma.$$ 
        The non-zero entries of $L_\rho$ are $(L_\rho)_{j,j}=\rho_{j-1,j}+\rho_{j,j+1}$ and $(L_\rho)_{j,j+1}=(L_\rho)_{j+1,j}=-\rho_{j,j+1}$, with $\rho_{0,1}=\rho_{m,m+1}=0$. Hence, $L_\rho$ is a symmetric positive semi-definite matrix with $L_\rho\mathbf{1}=0$. The right-hand side in \cref{eq:ineqAverage} is
        $$\sum_{j=1}^m\rho_{j,j+1}|\alpha_{j+1}-\alpha_j|^2=\alpha^TL_\rho\alpha=(\gamma+\beta\mathbf{1})^TL_\rho(\gamma+\beta\mathbf{1})=\gamma^TL_\rho\gamma .$$
        Hence \cref{eq:ineqAverage} is equivalent to $\gamma^TD_\rho\gamma\leq C \gamma^TL_\rho\gamma$ for all $\gamma\neq 0$ with $\mathbf{1}^TD_\rho\gamma=0$, and the best constant is the reciprocal of 
        \begin{equation}\label{eq:lambda2}
        \lambda_2(L_\rho,D_\rho):=\min\Bigl\{\tfrac{\gamma^TL_\rho\gamma}{\gamma^TD_\rho\gamma}\ :\ \gamma\neq0,\ \mathbf{1}^TD_\rho\gamma=0\Bigr\}; 
        \end{equation}
        i.e., the smallest non-zero eigenvalue of the generalized eigenvalue problem $L_\rho x=\lambda D_\rho x$. 
    \end{proof}
\end{lemma}

For an interior vertex $v$ with $m$ neighbors ordered in a cycle $E_1,\ldots,E_m$, we consider \cref{{eq:ineqAverage}} with weights $\rho_i:=\rho|_{E_i}$ and nodal values $\alpha_i$ on each element. Hence, \cref{eq:ineqAverage} allow us to bound the left term by jumps across the edges. The optimal constant in \cref{eq:ineqAverage} may of course depend on $\rho$. The point of Assumption \ref{ass:monotonoCoef} is that it provides a lower bound for $\lambda_2(L_\rho,D_\rho)$ that does not involve the contrast of the coefficient:

\begin{lemma}\label{N_term_rho}
Let Assumption \ref{ass:monotonoCoef} hold. Then, the optimal constant of \cref{N_term2} satisfies $C_{\mathrm{opt}}\leq m^2$ and is independent of $\rho_1,\ldots,\rho_m$.
\begin{proof}
   Since $\beta$ minimizes $t\mapsto\sum_j\rho_j|\alpha_j-t|^2$ over $t\in\mathbb{R}$, it holds that
    \begin{equation}\label{eq:minProperty}
   \sum_{j=1}^m\rho_j|\alpha_j-\beta|^2\;\leq\;\sum_{j=1}^m\rho_j|\alpha_j-\alpha_{j_{\max}}|^2 .
    \end{equation}
    Fix $j$ and let $j=j_1,\ldots,j_L=j_{\max}$ be a monotone path of indices. Since the harmonic mean of two positive numbers is bounded below by their minimum, $\rho_{j_l}\leq\rho_{j_{l+1}}$ gives
    \begin{equation}\label{eq:harmonicMonotone}
   \rho_j=\rho_{j_1}\leq\rho_{j_l}=\min\{\rho_{j_l},\rho_{j_{l+1}}\}\leq\rho_{j_l,j_{l+1}},
   \qquad 1\leq l\leq L-1 .
    \end{equation}
    Writing $\alpha_j-\alpha_{j_{\max}}$ as the telescoping sum of the increments along the path, the Cauchy-Schwarz inequality and \cref{eq:harmonicMonotone} yield
    $$\rho_j|\alpha_j-\alpha_{j_{\max}}|^2\leq(L-1)\sum_{l=1}^{L-1}\rho_j\bigl|\alpha_{j_{l+1}}-\alpha_{j_l}\bigr|^2 \leq m\sum_{l=1}^{L-1}\rho_{j_l,j_{l+1}}\bigl|\alpha_{j_{l+1}}-\alpha_{j_l}\bigr|^2,$$
    where $j_{\max}$ is an index for which $\rho_{j_{\max}} = \max_{j} \rho_j$. 
    Summing over $j$ and using that each edge of the patch belongs to at most $m$ of the paths, we obtain $\sum_j\rho_j|\alpha_j-\alpha_{j_{\max}}|^2\leq m^2\sum_{j=1}^m\rho_{j,j+1}|\alpha_{j+1}-\alpha_j|^2$, which together with \cref{eq:minProperty} proves the lemma. 
\end{proof}
\end{lemma}

\begin{remark}\label{rem:qmSharp}
Assumption \ref{ass:monotonoCoef} cannot be dispensed with in \cref{N_term_rho}. Let $\bx$ be an interior node surrounded by four elements, with $(\rho_1,\rho_2,\rho_3,\rho_4)=(M,1,M,1)$ and $M\geq1$; this is the configuration produced by a checkerboard coefficient around an interior vertex of a structured partition that is not quasi-monotone. All four edges carry the same harmonic mean $\rho_{j,j+1}=2M/(M+1)$ and a direct computation shows that the generalized eigenvalues are $\{0, 4/(M+1), 4M/(M+1), 4\}$. We then have that in \cref{N_term2} the optimal constant is $C_{\mathrm{opt}}=\frac{M+1}{4}$, which grows linearly in the contrast $M$, and no bound independent of the contrast can hold.
\end{remark}

\begin{remark}\label{rem:closingEdge}
In \cite{10.2307/4101206}, a bound of the form 
\begin{equation}\label{eq:ineqAverageB}
    \sum_{j=1}^m |\alpha_j-\beta|^2 \leq C \sum_{j=1}^{m-1} |\alpha_{j+1}-\alpha_j|^2
\end{equation}
was proved, where $C$ depends only on $m$. This inequality was used for a discontinuous formulation of our problem for finite element discretizations with $\rho=1$. For a variable coefficient, the closing term must not be discarded as in \cref{eq:ineqAverageB}. Let $m=5$ and $(\rho_1,\ldots,\rho_5)=(M,1,1,1,M)$ with $M\geq1$. This configuration satisfies Assumption \ref{ass:monotonoCoef}. Accordingly \cref{N_term_rho} applies, and the optimal constant of \cref{eq:ineqAverage} is
\begin{equation*}
   C_{\mathrm{opt}}=\frac{2}{5-\sqrt{(9M+1)/(M+1)}}\leq 1.
\end{equation*}
If instead the term $j=m$ is dropped as in \cref{eq:ineqAverageB}, the optimal constant now is 
\begin{equation*}
   C_{\mathrm{opt}}=\frac{(M+1)\bigl(3+\sqrt{9-8/(M+1)}\bigr)}{4}=\frac{3}{2}M+O(1),
\end{equation*}
and the estimate degenerates linearly in the contrast; see \cref{fig:patch}.
\end{remark}

\tikzset{
  mesh/.style  ={draw=black!40, line width=0.6pt},
  outer/.style ={draw=black!40, line width=0.9pt},
  eltlab/.style={font=\footnotesize, black!60},
  gnode/.style ={circle, fill=black, inner sep=1.6pt},
  glink/.style ={line width=1.2pt},
  gdrop/.style ={line width=1.2pt, densely dashed, draw=black!35},
  wlab/.style  ={font=\scriptsize, fill=white, inner sep=1.2pt},
  vlab/.style  ={font=\small},
  nuv/.style   ={circle, draw=black, fill=white, line width=0.7pt, inner sep=1.6pt},
  capt/.style  ={font=\small, align=center},
}
\newcommand{\vempatch}[2]{%
  \def\R{2.45}%
  \foreach \k in {0,...,4} {\pgfmathsetmacro\A{90+72*\k} \coordinate (Q\k) at (\A:\R);}%
  \coordinate (c) at (0,0);%
  \foreach \k in {0,...,4} {\pgfmathtruncatemacro\kk{mod(\k+1,5)}%
     \draw[mesh] (c)--(Q\k); \draw[outer] (Q\k)--(Q\kk);}%
  \foreach \k in {0,...,4} {\pgfmathsetmacro\A{90+72*\k+36} \pgfmathtruncatemacro\j{\k+1}%
     \node[gnode] (D\k) at (\A:{0.50*\R}) {};%
     \node[eltlab] at (\A:{1.16*\R}) {$E_\j$};%
     \ifnum#2=1 \node[vlab] at (\A:{0.82*\R}) {$\alpha_\j$};%
     \else       \node[vlab] at (\A:{0.82*\R}) {$\rho_\j$};\fi}%
  \foreach \k in {0,...,3} {\pgfmathtruncatemacro\kk{\k+1} \draw[glink] (D\k)--(D\kk);}%
  \ifnum#1=1 \draw[glink] (D4)--(D0); \else \draw[gdrop] (D4)--(D0);\fi%
  \ifnum#2=0%
    \foreach \k/\lb in {0/{\rho_{1,2}},1/{\rho_{2,3}},2/{\rho_{3,4}},3/{\rho_{4,5}}}%
      {\pgfmathsetmacro\A{90+72*\k+72} \node[wlab] at (\A:{0.445*\R}) {$\lb$};}%
    \pgfmathsetmacro\A{90+72*4+72} \node[wlab] at (\A:{0.445*\R}) {$\rho_{5,1}$};%
  \fi%
  \node[nuv] at (c) {}; \node[font=\scriptsize] at (-0.29,-0.15) {$\bx$};%
}

\begin{figure}[t!]
\centering
\begin{tikzpicture}[scale=0.93]
  \begin{scope}
    \vempatch{1}{0}
  \end{scope}
  \begin{scope}[xshift=7.3cm]
    \vempatch{0}{1}
  \end{scope}
\end{tikzpicture}
\caption{The patch $\mathcal{T}_{\bx,h}$ of an interior node $\bx$ with $m=5$ elements (in grey), and the graph (in black) underlying \cref{eq:ineqAverage} and \cref{eq:ineqAverageB}. (left) For the full sum in \cref{N_term2} with harmonic weights, the constant in independent of the contrast (right) If the edge shared by $E_5$ and $E_1$ is discarded in \cref{eq:ineqAverage}, the bound can be dependent of the contrast even if we use harmonic means; see \cref{rem:closingEdge}.}
\label{fig:patch}
\end{figure}

We now prove the existence of a continuous virtual element function that interpolates any function $u\in V_h$. This lemma allow us to define and bound the coarse component of a stable decomposition for $u$. We first prove the following equivalence:

\begin{lemma}
Let $e$ be an edge with endpoints $\bx_1$, $\bx_2$. For $r\in\mathcal{P}_1(e)$ it holds that
\begin{equation}\label{eq:P1equiv}
  \tfrac{|e|}{6}\bigl(r(\bx_1)^2+r(\bx_2)^2\bigr)\;\leq\;\|r\|^2_{L^2(e)}\;\leq\;\tfrac{|e|}{2}\bigl(r(\bx_1)^2+r(\bx_2)^2\bigr).
\end{equation}
    \begin{proof}
    Since $3\|r\|^2_{L^2(e)}=|e|\left(r(\bx_1)^2+r(\bx_1)r(\bx_2)+r(\bx_2)^2\right)$, 
    the lemma holds after a straightforward computation.
    \end{proof}
\end{lemma}

\begin{lemma}\label{lemmachi}Let $u \in V_h$. There exists a continuous piecewise virtual function $\chi \in V_h \cap H_0^1(\Omega)$ and a constant $C > 0$ such that 
\begin{align}
    \label{sumL2}\sum_{E\in\mathcal{T}_h} \rho_E\|u-\chi\|_{L^2(E)}^2 &\leq C \sum_{e\in\mathcal{E}_{h}} \rho_e h_e\|[u]\|_{L^2(e)}^2,\\
    \label{sumh1}
    \sum_{E\in\mathcal{T}_h}\rho_E \|\nabla(u-\chi)\|_{L^2(E)}^2 &\leq C \sum_{e\in\mathcal{E}_{h}} \rho_eh_e^{-1}\|[u]\|_{L^2(e)}^2,
\end{align}
where $C$ depends only on the shape regularity of the elements of ${\cal T}_h$.
\end{lemma}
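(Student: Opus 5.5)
We define $\chi$ nodewise by a $\rho$-weighted averaging operator and then reduce both estimates to the nodal inequality of \cref{N_term2}. For every interior node $\bx\in S_\mathcal{N}$ that lies in the interior of $\Omega$, set
\[
\chi(\bx):=\frac{\sum_{E\in\mathcal{T}_{\bx,h}}\rho_E\, u|_E(\bx)}{\sum_{E\in\mathcal{T}_{\bx,h}}\rho_E},
\]
and set $\chi(\bx)=0$ at nodes on $\partial\Omega$. On each element, $\chi|_E$ is the function of $V_1^E$ with these nodal values. Because the values are shared by all elements containing $\bx$, including those where $\bx$ is a hanging node, the traces of $\chi$ coincide on every edge: both sides are linear between the same consecutive nodes. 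Hence $\chi\in V_h\cap H^1_0(\Omega)$.

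\textbf{Local bound.} For $E\in\mathcal{T}_h$, $w:=(u-\chi)|_E\in V_1^E$. We use the standard VEM norm equivalences, which follow from Assumptions \ref{ass:A1} and \ref{ass:A3}:
\[
\|w\|_{L^2(E)}^2\le Ch_E^2\sum_{\bx\in S^E_\mathcal{N}}w(\bx)^2,\qquad |w|_{H^1(E)}^2\le C\sum_{\bx\in S^E_\mathcal{N}}w(\bx)^2 .
\]
Both are obtained from the harmonic extension bound $|w|_{H^1(E)}\le C|w|_{H^{1/2}(\partial E)}$ and a scaling argument for piecewise linear boundary data with $n_0$ edges. By \ref{ass:A2}, $h_E\approx h_e$ for the nearby edges. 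Both \cref{sumL2} and \cref{sumh1} therefore follow once we show, for each node $\bx$,
\begin{equation*}
\sum_{E\in\mathcal{T}_{\bx,h}}\rho_E\,|u|_E(\bx)-\chi(\bx)|^2\le C\sum_{e\ni\bx}\frac{\rho_e}{|e|}\|[u]\|_{L^2(e)}^2 .
\end{equation*}
The extra factor $h_E^2$, respectively $1$, together with the $1/|e|$ then produces the weights $h_e$, respectively $h_e^{-1}$. Summing over nodes, each element and edge is counted a bounded number of times because $n_0<\infty$, and the number of elements around a node is bounded by shape regularity.

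\textbf{Nodal inequality.} For an interior node, order the patch $\mathcal{T}_{\bx,h}=\{E_1,\dots,E_m\}$ cyclically, so that $E_j$ and $E_{j+1}$ share an edge $e_j$ with endpoint $\bx$. If $\bx$ is hanging for some element, that element still shares an edge portion containing $\bx$, and the jump is evaluated on the corresponding fine edge. Apply \cref{N_term2} with $\alpha_j=u|_{E_j}(\bx)$, and \cref{N_term_rho} under Assumption \ref{ass:monotonoCoef}, to get
\[
\sum_j\rho_j|\alpha_j-\chi(\bx)|^2\le m^2\sum_j\rho_{j,j+1}|\alpha_{j+1}-\alpha_j|^2 .
\]
The jump $[u]$ on $e_j$ is linear with value $\alpha_j-\alpha_{j+1}$ at $\bx$, so \cref{eq:P1equiv} gives $|\alpha_{j+1}-\alpha_j|^2\le \tfrac{6}{|e_j|}\|[u]\|_{L^2(e_j)}^2$, and $\rho_{j,j+1}=\rho_{e_j}$. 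For a boundary node, $\chi(\bx)=0$, so we must bound $\sum_j\rho_j\alpha_j^2$. Use the path from Assumption \ref{ass:monotonoCoef} to an element $E_b$ with a boundary edge $e_b\ni\bx$. There $[u]=u$, giving $\rho_b\alpha_b^2\le 6\rho_{e_b}|e_b|^{-1}\|[u]\|^2_{L^2(e_b)}$. Then telescope along the monotone path exactly as in the proof of \cref{N_term_rho}, using $\rho_{E}\le\rho_{F_l,F_{l+1}}$ and $\rho_E\le \rho_b$.

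\textbf{Main obstacle.} The delicate part is the combinatorics at nodes: hanging nodes, where a node is a vertex of some elements but lies in the interior of an edge of another, and the boundary case. Both need care so that the nodal jump differences are really controlled by $L^2$ edge jumps with contrast-independent constants. For a hanging node $\bx$ on an edge of $E$, the trace of $u|_E$ at $\bx$ is a convex combination of the values at the endpoints of that edge. We would handle this by splitting the long edge into the fine edges $e\in\mathcal{E}_h$ so that \cref{eq:P1equiv} applies on each piece. This is consistent with the definition of $\mathcal{E}_h$ via $S^E_\mathcal{N}$, under which $u|_E$ is linear on each segment between consecutive nodes.
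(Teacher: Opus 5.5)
Your proposal is correct and follows essentially the same route as the paper: $\chi$ is defined by $\rho$-weighted nodal averages, the element norms of $u-\chi$ are reduced to nodal differences through the bounds on the nodal basis functions, and \cref{N_term2} with the constant $m^2$ of \cref{N_term_rho} (hence Assumption \ref{ass:monotonoCoef}), together with the lower bound in \cref{eq:P1equiv}, converts these into weighted edge jumps. Your telescoping argument at boundary nodes fills in what the paper states only as ``the same bound holds''; note also that hanging nodes belong to $S^E_{\mathcal{N}}$, so $u|_E(\bx)$ is an independent degree of freedom there rather than a convex combination of endpoint values, which is why your final edge-splitting resolution is the right one.
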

\begin{proof}
We construct $\chi \in V_h\cap H_0^1(\Omega)$ similarly as in \cite{10.1007/s10915-017-0453-4} for the case of triangular meshes, except that the degrees of freedom of $\chi$ are defined as the $\rho$-weighted averages of the nodal values of the discontinuous function $u\in V_h$. 

For $u\in V_h$ we write $u=\sum_{E\in\mathcal{T}_h}\ \sum_{\bx\in S^E_{\mathcal{N}}}\alpha_E^{(\bx)}\,\phi_{E,\bx}$ with $\alpha_E^{(\bx)}=u|_E(\bx)$, so that $\alpha_E^{(\bx)}$ is the value that $u$ takes at the node $\bx$ as seen from the element $E$. We define
\begin{equation*}
    \chi:=\sum_{\bx\in\mathcal{N}_v}\beta^{(\bx)}\,\phi^h_{\bx}, \qquad \beta^{(\bx)}:=\frac{\sum_{E\in\mathcal{T}_{\bx,h}}\rho_E\,\alpha_E^{(\bx)}}{\sum_{E\in\mathcal{T}_{\bx,h}}\rho_E},
\end{equation*}
where $\phi^h_{\bx}:=\sum_{E\,:\,\bx\in S^E_{\mathcal{N}}}\phi_{E,\bx}$ is the nodal continuous function with $\phi^h_{\bx}(\bx)=1$ that vanishes at all the remaining vertices of the mesh. Clearly $\chi\in V_h\cap H^1_0(\Omega)$. Setting $\beta^{(\bx)}:=0$ for $\bx\in\mathcal{N}_b$, this reads $\chi=\sum_{E\in\mathcal{T}_h}\ \sum_{\bx\in S^E_{\mathcal{N}}}\beta^{(\bx)}\,\phi_{E,\bx}$.

From Assumptions \ref{ass:A1} and \ref{ass:A3}, we have that $\|\phi^h_{\bx}\|_{L^2(E)}^2\leq C_1 h_E^2$. Therefore, grouping the terms by nodes,
\begin{align*}
    \sum_{E\in\mathcal{T}_h}\rho_E \|u-\chi\|_{L^2(E)}^2 \leq  C_1 \max_{E \in \mathcal{T}_h} (d_E)
    \sum_{\bx\in S_{\mathcal{N}}} h_{\bx}^2 \sum_{E\in\mathcal{T}_{\bx,h}}
    \rho_E\bigl|\alpha_E^{(\bx)}-\beta^{(\bx)}\bigr|^2,
\end{align*}
with $h_{\bx} = \max_{E\in\mathcal{T}_{\bx,h}} h_E$.

Fix an interior node $\bx\in\mathcal{N}_v$ and let $E_1,\ldots,E_m$ be the elements of $\mathcal{T}_{\bx,h}$ enumerated cyclically such that $E_i$ and $E_{i+1}$ share an edge $e_i\in\mathcal{E}_h$ having $\bx$ as an endpoint. Writing $\alpha_i:=\alpha_{E_i}^{(\bx)}$, by \cref{N_term2} we have that
\begin{equation*}
   \sum_{E\in\mathcal{T}_{\bx,h}}\rho_E\bigl|\alpha_E^{(\bx)}-\beta^{(\bx)}\bigr|^2
   \leq m^2\sum_{i=1}^{m}\rho_{e_i}\bigl|\alpha_{i+1}-\alpha_i\bigr|^2 ,
\end{equation*}
where $\rho_{e_i}$ is the harmonic mean of the coefficients of $E_i$ and $E_{i+1}$. The difference $\alpha_{i+1}-\alpha_i$ is the jump of $u$ at $\bx$ across $e_i$, that is $[u]|_{e_i}(\bx)$. For a boundary node $\bx\in\mathcal{N}_b$ we have $\beta^{(\bx)}=0$ and the same bound holds. Summing over the nodes and regrouping by edges,
\begin{equation*}
   \sum_{E\in\mathcal{T}_h}\rho_E\|u-\chi\|^2_{L^2(E)}
   \leq C\sum_{e\in\mathcal{E}_h}\rho_e\sum_{\bx\in e}h_{\bx}^2\bigl|[u]|_e(\bx)\bigr|^2 .
\end{equation*}
Finally, the two endpoints of $e$ satisfy $h_{\bx}\leq Ch_e$ by Assumption \ref{ass:A2}; since $[u]|_e\in\mathcal{P}_1(e)$, the lower bound in
\cref{eq:P1equiv} together with $|e|\approx h_e$ from Assumption \ref{ass:A3} gives
\begin{equation*}
   \sum_{\bx\in e} h_{\bx}^2\bigl|[u]|_e(\bx)\bigr|^2
   \leq C h_e^2\sum_{\bx\in e}\bigl|[u]|_e(\bx)\bigr|^2
   \leq C\,\frac{h_e^2}{|e|}\,\|[u]\|_{L^2(e)}^2
   \leq C\,h_e\|[u]\|_{L^2(e)}^2,
\end{equation*}
and we obtain \cref{sumL2}. Since $|\phi^h_{\bx}|_{H^1(E)}^2\leq C_2$, \cref{sumh1} is deduced similarly. 
\end{proof}

\begin{lemma}\label{lem:jumpInterp}
Let $\{\theta_i\}_{i=1}^N\subset C^0(\overline{\Omega})$ satisfy $0\leq\theta_i\leq1$ and
$\sum_{i=1}^N\theta_i=1$. For every $v\in V_h$ it holds that
\begin{equation}\label{eq:jumpInterpSum}
   \sum_{i=1}^N\|[I^h(\theta_i v)]\|^2_{L^2(e)}\leq 3\|[v]\|^2_{L^2(e)}
   \qquad\forall e\in\mathcal{E}_h.
\end{equation}
\begin{proof}
Fix and edge $e$ and let $\bx_1,\bx_2$ be its endpoints. Set $q:=[v]|_e$ and $p_i:=[I^h(\theta_i v)]|_e$. Since $v,I^h(\theta_i v)\in V_h$, then $p_i,q\in\mathcal{P}_1(e)$. As $\theta_i$ is single-valued at each node, the degrees of freedom of $I^h(\theta_i v)$ associated with $\bx_l$ and with $E^{\pm}$ are $\theta_i(\bx_l)v^{\pm}(\bx_l)$, so that
$$p_i(\bx_l)=\theta_i(\bx_l)v^+(\bx_l)-\theta_i(\bx_l)v^-(\bx_l)=\theta_i(\bx_l)\,q(\bx_l),\qquad l\in\{1,2\},$$
and likewise for $e\in\mathcal{E}^B$. By the upper bound in \cref{eq:P1equiv} and
$\sum_{i}\theta_i(\bx)^2\leq\sum_{i}\theta_i(\bx)=1$, then
$$\sum_{i=1}^N\|p_i\|^2_{L^2(e)}\leq\tfrac{|e|}{2}\sum_{l=1}^2\Bigl(\sum_{i=1}^N\theta_i(\bx_l)^2\Bigr)q(\bx_l)^2\leq\tfrac{|e|}{2}\sum_{l=1}^2 q(\bx_l)^2,$$
and the lower bound in \cref{eq:P1equiv} gives
$\dfrac{|e|}{2}\sum_{l}q(\bx_l)^2\leq3\|q\|^2_{L^2(e)}$.
\end{proof}
\end{lemma}

Let $\omega_i:= 1 +\log(H_i/h_i)$ and $\omega := \displaystyle\max_{i}\omega_i$. We now recall from \cite{CalvoJ} the two estimates for the coarse operators that we use. Both are stated there for the conforming virtual element space $V_h\cap H^1_0(\Omega)$. We will apply these results to the function $\chi$ of \cref{lemmachi}.

\begin{lemma}[{\cite[Lemma 5.2]{CalvoJ}}]\label{lem:discreteSobolev}
Let $v\in V_h\cap H^1_0(\Omega)$ have zero mean value on $\Omega_i$. Then
\begin{equation}\label{eq:discreteSobolev}
\|v\|^2_{L^\infty(\Omega_i)}\;\leq\;C\,\omega_i\,|v|^2_{H^1(\Omega_i)},
\end{equation}
with $C$ independent of $H_i$ and $h_i$.
\end{lemma}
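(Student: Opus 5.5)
The plan is to prove \cref{eq:discreteSobolev} by reducing it to the classical discrete Sobolev inequality for continuous piecewise linear functions, with the virtual element functions handled through an auxiliary triangulation. Since the estimate is local to $\Omega_i$ and involves only $v|_{\Omega_i}$, which is continuous and lies in $V_1^E$ on each $E\subset\Omega_i$, we work inside one subdomain.

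\emph{Step 1: auxiliary triangulation.} Split every $E\subset\Omega_i$ into triangles by joining the center of the ball of \ref{ass:A1} to the nodes of $S^E_{\mathcal{N}}$. By \ref{ass:A1} and \ref{ass:A3} these triangles are shape regular with diameters comparable to $h_E$. By \ref{ass:A2} the resulting triangulation $\widehat{\mathcal{T}}_{h,i}$ of $\Omega_i$ is locally quasi-uniform, with minimal mesh size comparable to $h_i$.

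\emph{Step 2: piecewise linear surrogate.} Let $\hat v$ be the continuous piecewise linear function on $\widehat{\mathcal{T}}_{h,i}$ that agrees with $v$ at the nodes of $S^E_{\mathcal{N}}$. Its value at each interior center is chosen as a nodal average, for instance $P_0(v|_E)$. Because $v|_{\partial E}$ is piecewise linear, $\hat v=v$ on every $\partial E$.

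\emph{Step 3: norm equivalences for $\hat v$.} We establish, element by element,
\[
|\hat v|_{H^1(E)}\leq C|v|_{H^1(E)},\qquad \|v\|_{L^\infty(E)}\leq \|\hat v\|_{L^\infty(E)}.
\]
The second inequality follows from the maximum principle, since $v$ is harmonic in $E$ and equals $\hat v$ on $\partial E$. For the first, by scaling $|\hat v|^2_{H^1(E)}$ is equivalent to $\sum$ of squared differences of nodal values, which is controlled by $|v|_{H^1(\partial E)}^2 h_E$. This in turn is controlled by $|v|^2_{H^1(E)}$ through the standard VEM norm equivalences for harmonic functions with piecewise linear traces on shape-regular elements, as in \cite{basicVEM}, equivalently through the $H^{1/2}(\partial E)$ seminorm and harmonic extension.

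\emph{Step 4: apply the classical inequality.} For continuous piecewise linear functions on shape-regular triangulations of a domain of diameter $H_i$ that is the union of elements, the classical discrete Sobolev inequality gives
\[
\|\hat v-c\|^2_{L^\infty(\Omega_i)}\leq C(1+\log(H_i/h_i))\,\|\hat v-c\|^2_{H^1(\Omega_i)}\quad\text{(suitably scaled)}.
\]
See \cite[Lemma 4.15]{toselli2006domain}; for irregular subdomains, the John-domain version used in \cite{CalvoJ}. Take $c=\bar{\hat v}$, the mean of $\hat v$ on $\Omega_i$, and apply the Poincaré inequality with constant $H_i$ to get $\|\hat v-\bar{\hat v}\|_{L^\infty}^2\leq C\omega_i|\hat v|^2_{H^1(\Omega_i)}$. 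Since $\bar v=0$, we have
\[
|\bar{\hat v}|=|\overline{\hat v-v}|\leq |\Omega_i|^{-1/2}\|\hat v-v\|_{L^2(\Omega_i)}\leq C h|\Omega_i|^{-1/2}|v|_{H^1(\Omega_i)},
\]
by an elementwise Poincaré inequality for $\hat v-v$, which vanishes on $\partial E$. This term is harmless. Combining with Step 3 gives \cref{eq:discreteSobolev}.

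The main obstacle is Step 4 for irregular subdomains produced by METIS. There the Poincaré and discrete Sobolev constants depend on the geometry of $\Omega_i$, so we would need to assume, as in \cite{CalvoJ}, that each $\Omega_i$ is a John domain, or a union of a bounded number of shape-regular coarse pieces, in order to keep $C$ independent of $H_i$ and $h_i$.
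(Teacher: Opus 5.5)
The paper gives no proof of this lemma. It imports it from \cite[Lemma 5.2]{CalvoJ}, saying only that it is stated there for the conforming virtual space. Your proposal is therefore a reconstruction rather than a variant of an argument in the text, and it is correct.

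Your reduction goes in four steps:
\begin{itemize}
\item build the fan sub-triangulation of each $E$;
\item use the maximum principle to get $\|v\|_{L^\infty(E)}\le\|\hat v\|_{L^\infty(E)}$;
\item prove the local bound $|\hat v|_{H^1(E)}\le C|v|_{H^1(E)}$, via the inverse inequality $h_E^{1/2}|v|_{H^1(\partial E)}\le C|v|_{H^{1/2}(\partial E)}$ for piecewise linear traces on at most $n_0$ edges of length $\approx h_E$, plus the trace theorem;
\item apply the $\mathcal{P}_1$ discrete Sobolev and Poincar\'e inequalities on $\Omega_i$.
\end{itemize}
Your treatment of the mean is also right: $\|\hat v-v\|_{L^2(E)}\le Ch_E|v|_{H^1(E)}$, and $|\Omega_i|\ge|E|\ge\pi\gamma^2h_E^2$ for every $E\subset\Omega_i$.

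Compared with the bare citation, your route makes explicit two things the paper leaves implicit. First, the estimate really does hold on general polygons with hanging nodes. The fan triangulation is conforming precisely because $S^E_{\mathcal{N}}$ contains the hanging nodes, so neighbouring elements see the same nodes on their common boundary. Its triangles are shape regular because the supporting line of an edge of a polygon star-shaped with respect to a ball cannot cut that ball. Each triangle therefore has height at least $\gamma h_E$ over a base at least $\gamma' h_E$. Second, uniformity of $C$ rests on a geometric hypothesis on $\Omega_i$, such as the John condition under which the $\mathcal{P}_1$ inequality is available. Your caveat about METIS subdomains is a limitation of the statement as quoted here, not of your proof.

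Two small corrections are needed.
\begin{itemize}
\item The fan must use all vertices on $\partial E$, including those on $\partial\Omega$. These are excluded from $S^E_{\mathcal{N}}$; for $v\in H^1_0(\Omega)$ they carry the value $0$.
\item Local quasi-uniformity (Assumption~\ref{ass:A2}) does not make the smallest sub-triangle in $\Omega_i$ comparable to a single $h_i$. This is harmless: the shape-regular form of the discrete Sobolev inequality only involves $\log(H_i/h_K)$ for the element $K$ where the maximum is attained. It therefore suffices to read $h_i$ as the smallest element diameter in $\Omega_i$.
\end{itemize}
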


\begin{lemma}[{\cite[Lemma 5.7]{CalvoJ}}]\label{u-u0}
    Let $\chi\in V_h\cap H^1_0(\Omega)$ and set 
    $$u_0:=\sum_{\bx_0\in S_{\mathcal{V}}}\chi(\bx_0)\psi^H_{\bx_0}\in V_0^{R,C}.$$
    If $k\geq2$, then for $1\leq i\leq N$
    \begin{align}
    \|\chi-R_0^Tu_0\|^2_{L^2(\Omega_i)} &\leq C\,\omega_i H_i^2\,|\chi|^2_{H^1(\Omega_i)}, \quad     |R_0^Tu_0|^2_{H^1(\Omega_i)} \leq C\,\omega_i\,|\chi|^2_{H^1(\Omega_i)},\label{h1u0}
    \end{align}
    where $C$ is independent of $H_i,h_i,\delta_i$ and $N$.
\end{lemma}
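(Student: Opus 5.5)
The plan is to reduce both estimates in \cref{h1u0} to the discrete Sobolev inequality of \cref{lem:discreteSobolev}. The reduction uses the fact that the coarse extension reproduces constants on each subdomain. Fix $i$ and let $c:=\frac{1}{|\Omega_i|}\int_{\Omega_i}\chi$.

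\emph{Step 1: constants are reproduced.} Since $\sum_{\bx_0}\psi^H_{\bx_0}=1$, the coarse function $c=\sum_{\bx_0}c\,\psi^H_{\bx_0}$ lies in $V_0^{R,C}$. Its identification $\tilde c$ in $V_H^k$ is the constant $c$, so $\Pi^\nabla_{\Omega_i,k}\tilde c=c$. Both rules (a) and (b) defining $R_0^T$ then assign the value $c$, hence $R_0^Tc=c$ on $\Omega_i$. Setting $w:=u_0-c=\sum_{\bx_0}(\chi(\bx_0)-c)\psi^H_{\bx_0}$, we get
\[
\chi-R_0^Tu_0=(\chi-c)-R_0^Tw,\qquad |R_0^Tu_0|_{H^1(\Omega_i)}=|R_0^Tw|_{H^1(\Omega_i)}.
\]
Only the finitely many vertices $\bx_0\in S^{\Omega_i}_{\mathcal{N}}\cap S_{\mathcal V}$ contribute on $\Omega_i$. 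By \cref{lem:discreteSobolev}, applied to $\chi-c$ (whose proof is local to $\Omega_i$, so membership in $H^1_0(\Omega)$ is immaterial), every coefficient satisfies $|\chi(\bx_0)-c|^2\le C\omega_i|\chi|^2_{H^1(\Omega_i)}$.

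\emph{Step 2: stability of a single extended basis function.} I will show that
\[
\|R_0^T\psi^H_{\bx_0}\|_{L^\infty(\Omega_i)}\le C,\qquad |R_0^T\psi^H_{\bx_0}|_{H^1(\Omega_i)}\le C .
\]
The function $\tilde\psi^H_{\bx_0}$ is harmonic (or has polynomial Laplacian) in $\Omega_i$, with piecewise linear trace taking values in $[0,1]$. Its energy is bounded by that of any $H^1$ extension of this trace. The trace is $1$ at $\bx_0$, decays linearly along the adjacent subdomain edges of length $\approx H_i$, and these edges are non-degenerate, so such an extension with $O(1)$ energy exists. For $R_0^T\psi^H_{\bx_0}=I^h(\Pi^\nabla_{\Omega_i,k}\tilde\psi)+\sum_{\bx\in\partial\Omega_i}(\psi-\Pi^\nabla\tilde\psi)(\bx)\phi^h_{\bx}$, the first term is controlled by the $H^1$-stability of $\Pi^\nabla_{\Omega_i,k}$ and of $I^h$ on polynomials of fixed degree $k$, which uses the shape regularity of $\Omega_i$. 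For the boundary layer I would bound $|\phi^h_{\bx}|_{H^1}\le C$ and sum $|(\psi-\Pi^\nabla\tilde\psi)(\bx)|^2$ over the $O(H_i/h_i)$ boundary nodes. I would handle that sum by a trace/inverse argument on the strip of boundary elements, giving $\sum_{\bx}|\cdot|^2\le C h_i^{-1}\|\psi-\Pi^\nabla\tilde\psi\|^2_{L^2(\partial\Omega_i)}\le C$ via a polynomial approximation estimate for $\tilde\psi$ near the boundary.

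\emph{Step 3: conclusion.} For the $L^2$ bound I combine Poincaré, $\|\chi-c\|^2_{L^2(\Omega_i)}\le CH_i^2|\chi|^2_{H^1(\Omega_i)}$, with
\[
\|R_0^Tw\|^2_{L^2(\Omega_i)}\le |\Omega_i|\,\|R_0^Tw\|^2_{L^\infty}\le CH_i^2\max_{\bx_0}|\chi(\bx_0)-c|^2\le CH_i^2\omega_i|\chi|^2_{H^1(\Omega_i)} .
\]
For the $H^1$ bound I use the triangle inequality and Step 2: $|R_0^Tw|_{H^1}\le\sum_{\bx_0}|\chi(\bx_0)-c|\,|R_0^T\psi^H_{\bx_0}|_{H^1}\le C\omega_i^{1/2}|\chi|_{H^1(\Omega_i)}$.

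The main obstacle is Step 2 for the irregular subdomains produced by METIS. There the boundary correction term and the stability of $\Pi^\nabla_{\Omega_i,k}$ must be controlled without assuming that $\Omega_i$ is a shape-regular polygon. I would first try to rely on the same geometric assumptions on $\mathcal{T}_H$ made in \cite{CalvoJ} (John or uniform-domain type conditions). If the boundary term only yields a $\log(H_i/h_i)$ factor, that factor is still absorbed into $\omega_i$.
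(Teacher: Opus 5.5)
Your Step 1 is the right reduction: constants are reproduced, and \cref{lem:discreteSobolev} controls the coefficients $\chi(\bx_0)-c$. The $L^2$ half of the statement follows from it. Given your bound $|\tilde\psi^H_{\bx_0}|_{H^1(\Omega_i)}\le C$, the estimate $\|R_0^T\psi^H_{\bx_0}\|_{L^\infty(\Omega_i)}\le C$ holds by the elementwise maximum principle and the stability of $\Pi^\nabla_{\Omega_i,k}$ on $\mathcal{P}_k$.

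The gap is the $H^1$ half of Step 2, in the boundary-layer term $\sum_{\bx\in\partial\Omega_i}g(\bx)\phi^h_{\bx}$. Here $g:=\psi^H_{\bx_0}-p$ and $p:=\Pi^\nabla_{\Omega_i,k}\tilde\psi^H_{\bx_0}$. Your trace/inverse argument gives $\sum_{\bx}|g(\bx)|^2\approx h_i^{-1}\|g\|^2_{L^2(\partial\Omega_i)}$. No polynomial approximation estimate can make $\|g\|^2_{L^2(\partial\Omega_i)}$ of order $h_i$. The functions $\psi^H_{\bx_0}$ and $\tilde\psi^H_{\bx_0}$ are fixed by the coarse geometry and $k$ is fixed. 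So $\|g\|_{L^2(\partial\Omega_i)}$ is bounded below by the distance $\epsilon_k H_i^{1/2}$ of the trace of $\psi^H_{\bx_0}$ to $\mathcal{P}_k|_{\partial\Omega_i}$, and this distance is independent of $h_i$.

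This distance is positive as soon as $\Omega_i$ has $n>k+2$ sides lying on distinct lines. A polynomial vanishing on the $n-2$ sides not adjacent to $\bx_0$ is divisible by $n-2$ linear forms, hence vanishes identically if $k<n-2$, which contradicts $p(\bx_0)=1$. An example is a fixed hexagonal subdomain with $k\le3$; generically the same holds for irregular subdomains. Your bound on the layer is therefore $C\epsilon_k^2H_i/h_i$, which is neither $O(1)$ nor logarithmic.

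This is not an artefact of the estimate. Take a boundary element $E$ with an interface node $\bx$ and an interior node $\by$. The equivalence $|v|^2_{H^1(E)}\approx\sum_{\bx'\in S^E_{\mathcal{N}}}|v(\bx')-\bar v|^2$ on $V^E_1$ gives $|R_0^T\psi^H_{\bx_0}|^2_{H^1(E)}\ge c\,|\psi^H_{\bx_0}(\bx)-p(\by)|^2$, and $|p(\bx)-p(\by)|\le Ch_i/H_i$. Summing over the boundary elements gives $|R_0^T\psi^H_{\bx_0}|^2_{H^1(\Omega_i)}\ge c\sum_{\bx}|g(\bx)|^2-C\ge c\,\epsilon_k^2H_i/h_i-C$.

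Now take $\chi=I^h\eta$, with $\eta$ a smooth bump satisfying $\eta(\bx_0)=1$ and vanishing at the other subdomain vertices. Then $u_0=\psi^H_{\bx_0}$ and $|\chi|_{H^1(\Omega_i)}\le C$, so the second estimate in \cref{h1u0} fails for large $H_i/h_i$ on such subdomains. Step 2 therefore cannot be closed by a sharper argument, and your fallback hope of a logarithmic loss is unfounded.

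You need an additional ingredient. One option is exact reproduction, $g\equiv0$. This holds for triangles, and for squares where $\tilde\psi^H_{\bx_0}$ is bilinear and any $k\ge2$ recovers it, as the paper notes in its experiments. Another is an explicit hypothesis such as $\max_{\bx_0}\|g\|^2_{L^\infty(\partial\Omega_i)}\le Ch_i/H_i$. A third is to let the constant carry the factor $1+\epsilon_k^2H_i/h_i$. The paper gives no proof of its own here and imports the lemma from \cite{CalvoJ}, so you should check which such condition that argument actually uses.
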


\begin{lemma}[Stable decomposition] Given $u \in V_h$, there exist functions $u_i\in V_i$ for $i\in\{0,1,\ldots N\}$ and a constant $C_0$ independent of $H,h,N,\rho$ such that $u= \sum_{i=0}^N R_i^T u_i$ and
\begin{equation}\label{eq:C0}
   \sum_{i=0}^N\tilde{B}_{h,i}(u_i, u_i) \leq C^2_0\,B_h(u,u), \qquad C_0^2\leq C\,\omega\Bigl(1+\frac{H}{\delta}\Bigr).
\end{equation}
\end{lemma}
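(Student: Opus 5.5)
Given $u\in V_h$, we first split off the nonconforming part with \cref{lemmachi}: let $\chi\in V_h\cap H^1_0(\Omega)$ be the $\rho$-averaged continuous approximation and write $u=\chi+(u-\chi)$. The coarse component is $u_0:=\sum_{\bx_0\in S_{\mathcal{V}}}\chi(\bx_0)\psi^H_{\bx_0}\in V_0^{R,C}$, as in \cref{u-u0}. The remainder $w:=u-R_0^Tu_0=(\chi-R_0^Tu_0)+(u-\chi)$ is distributed to the local spaces through a partition of unity $\{\theta_i\}$ subordinate to the overlapping cover $\{\Omega_i'\}$, with $0\le\theta_i\le1$, $\sum_i\theta_i=1$, $\theta_i$ vanishing outside $\Omega_i'$ and $|\nabla\theta_i|\le C/\delta_i$. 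We set $u_i:=I^h(\theta_i w)$. Since $I^h$ acts nodewise and $\theta_i$ is single-valued at nodes, $\sum_i I^h(\theta_i w)=I^h w=w$, so $u=\sum_{i=0}^N R_i^Tu_i$.

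By \cref{Continuity}, $\tilde B_{h,i}(u_i,u_i)\le C\|u_i\|_{1,h}^2$, and by \cref{Coercivity}, $\|u\|_{1,h}^2\le C B_h(u,u)$. It therefore suffices to bound $\|R_0^Tu_0\|_{1,h}^2+\sum_i\|u_i\|_{1,h}^2$ by $C\omega(1+H/\delta)\|u\|_{1,h}^2$.

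\emph{Coarse term.} $R_0^Tu_0$ is continuous, so it has no jumps and $\|R_0^Tu_0\|_{1,h}^2=\sum_i\rho_i|R_0^Tu_0|^2_{H^1(\Omega_i)}\le C\omega\sum_i\rho_i|\chi|^2_{H^1(\Omega_i)}$ by \cref{h1u0}. Then $\rho_i|\chi|^2_{H^1(\Omega_i)}\le 2\rho_i\sum_{E\subset\Omega_i}(|u|_{H^1(E)}^2+|u-\chi|_{H^1(E)}^2)$, and \cref{sumh1} gives $\sum_i\rho_i|\chi|^2_{H^1(\Omega_i)}\le C\|u\|_{1,h}^2$. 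Here \cref{N_term_rho} is what keeps the constant independent of $\rho$. Since $\rho$ is constant on each $\Omega_i$, the coarse estimates of \cite{CalvoJ} apply subdomain by subdomain with weight $\rho_i$.

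\emph{Local terms.} Write $u_i=I^h(\theta_i(\chi-R_0^Tu_0))+I^h(\theta_i(u-\chi))$.
\begin{itemize}
\item The first piece is conforming. The standard overlapping Schwarz argument of \cite{CalvoJ,toselli2006domain} (elementwise product rule, $|\nabla\theta_i|\le C/\delta_i$, and a boundary-layer estimate on the strip of width $\delta_i$ using \cref{lem:discreteSobolev}) bounds $\sum_i\rho_i|I^h(\theta_i(\chi-R_0^Tu_0))|_{H^1}^2$ by $C\omega(1+H/\delta)\sum_i\rho_i|\chi|_{H^1(\Omega_i)}^2$, using \cref{h1u0}. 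Overlap regions cross subdomains with different $\rho$; this is handled by the finite overlap multiplicity and by using only the $\rho_j$-weighted seminorms on each $\Omega_j$.
\item For the nonconforming piece $z:=u-\chi$, the jumps satisfy $[z]=[u]$, and \cref{lem:jumpInterp} gives $\sum_i\sum_e\frac{\rho_e}{|e|}\|[I^h(\theta_iz)]\|_{L^2(e)}^2\le3\sum_e\frac{\rho_e}{|e|}\|[u]\|^2_{L^2(e)}$.
\item For the broken gradient of $I^h(\theta_i z)$, the elementwise inverse estimate and the bound $|\nabla\theta_i|\le C/\delta\le C/h$ give $\rho_E|I^h(\theta_iz)|_{H^1(E)}^2\le C\rho_E(|z|_{H^1(E)}^2+h_E^{-2}\|z\|^2_{L^2(E)})$. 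Then \cref{sumL2} and \cref{sumh1} bound the sum by $C\sum_e\rho_eh_e^{-1}\|[u]\|^2$.
\end{itemize}

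We expect the main obstacle to be the first local piece: making the $\delta$-strip estimate for $\chi-R_0^Tu_0$ coefficient-robust. The strip of $\Omega_i'\setminus\Omega_i$ lies in neighboring subdomains, so the estimates have to be localized to each $\Omega_j$ with its own $\rho_j$. The $L^2$ bound in \cref{h1u0} and the $L^\infty$ bound \cref{eq:discreteSobolev}, applied on each $\Omega_j$ separately, should supply the factors $\omega$ and $H/\delta$. Combining all the terms yields \cref{eq:C0}.
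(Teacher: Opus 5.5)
Your proposal is correct and uses the paper's decomposition: the same $\chi$ from \cref{lemmachi}, the same $u_0$, and $u_i=I^h(\theta_i w)$ with $w=u-R_0^Tu_0$. The jump terms are handled by \cref{lem:jumpInterp} exactly as in the paper.

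Where you differ is the bookkeeping of the local energy. You split $w=(\chi-R_0^Tu_0)+(u-\chi)$ into two parts:
\begin{itemize}
\item a conforming part, whose local pieces are exactly the local components of the conforming decomposition of \cite{CalvoJ} applied to $\chi$;
\item a nonconforming part, which costs only $C\|u\|_{1,h}^2$, with no factor $\omega$ or $H/\delta$.
\end{itemize}
The paper instead applies the product rule to all of $w$ and bounds the $I^h(\tilde\theta_i)\nabla w$ term through $|w|_{H^1}$. It then splits only the strip term into $w_1=u-\chi$, $w_2=\chi-u_0$ and $w_3=u_0-R_0^Tu_0$. Your grouping makes the reduction to the conforming case transparent, while the paper's makes the strip estimate self-contained.

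\textbf{A caution on the step you flag as the main obstacle.} Do not apply \cref{lem:discreteSobolev} to $\chi-R_0^Tu_0$ as a whole. By \cref{h1u0}, $|\chi-R_0^Tu_0|^2_{H^1(\Omega_j)}$ already carries a factor $\omega_j$, so that route gives $\omega^2H/\delta$ instead of $\omega H/\delta$. There are two correct options.
\begin{enumerate}
\item Use the standard boundary-layer estimate on the layer $\Omega_{j,\delta}$ of width $\delta$ along $\partial\Omega_j$,
\[
\frac{1}{\delta^{2}}\|v\|^2_{L^2(\Omega_{j,\delta})}\leq C\Bigl[\Bigl(1+\frac{H}{\delta}\Bigr)|v|^2_{H^1(\Omega_j)}+\frac{1}{H\delta}\|v\|^2_{L^2(\Omega_j)}\Bigr],
\]
together with both the $L^2$ and the $H^1$ bounds of \cref{h1u0}.
\item Split further, as the paper does. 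Since $u_0$ is harmonic and the $\psi^H_{\bx_0}$ form a nonnegative partition of unity, $\chi-u_0$ is controlled in $L^\infty(\Omega_i)$ by $\|\chi-\hat\chi\|_{L^\infty(\Omega_i)}$, where $\hat\chi$ is the mean of $\chi$ on $\Omega_i$. Thus \cref{lem:discreteSobolev} is applied to $\chi$ alone. The remainder $u_0-R_0^Tu_0$ vanishes on $\partial\Omega_i$, so Friedrichs' inequality on patches of size $\delta$ bounds its strip term by its $H^1$ seminorm, without an extra logarithm.
\end{enumerate}

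\textbf{A minor point.} Bounding $\sum_E\rho_Eh_E^{-2}\|u-\chi\|^2_{L^2(E)}$ on a mesh that is only locally quasi-uniform needs the local, node-by-node form of \cref{sumL2} given by the proof of \cref{lemmachi}, not its global statement. The paper uses the same local form for $w_1$.
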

\begin{proof}
Consider $u_0  \in V_0^{R,C}$ as in \cref{u-u0}. Let $w := u - R_0^Tu_0 \in V_h$ and define $u_i=R_i(I^h(\tilde{\theta}_i w )) \in V_i$, $i=1,...,N$, where \(\{\tilde{\theta}_i\}_{i=1}^N\) is a typical partition of unity for the overlapping subdomains \(\Omega'_i\); see \cite{toselli2006domain}. 

For the  coarse component, since it is a continuous function, all jumps vanish and it holds that
\begin{align*}
    \tilde{B}_{h,0}(u_0, u_0) &= B_h(R_0^T u_0, R_0^T u_0)= \sum_{i=1}^N \hat{a}_h^{\Omega_i}(R_0^T u_0, R_0^T u_0)\leq C \sum_{i=1}^N \rho_{\Omega_i} |R_0^T u_0|^2_{H^1(\Omega_i)}\\
    &\leq C\sum_{i=1}^N \rho_{\Omega_i}\omega_i |\chi|^2_{H^1(\Omega_i)}\leq C \omega B_h(u,u),
\end{align*}
where we have used continuity of the VEM bilinear form, \cref{u-u0} and \cref{lemmachi}.

From \cref{Continuity}, we can bound the contribution of the local components by
\begin{equation}\label{local_bi}
    B_h(R_i^T u_i, R_i^T u_i) \leq C \sum_{E \subset \Omega_i'} \rho_E\left|I^h(\tilde{\theta}_i w)\right|_{H^1(E)}^2 + C \sum_{e \in \mathcal{E}_h} \frac{\rho_e}{|e|} \left\|[I^h(\tilde{\theta}_i w)]\right\|^2_{L^2(e)}.
\end{equation}

To bound the first summation, note that for each \(E \subset \Omega_i'\),
\begin{equation}\label{first_term}
    \rho_E\left| \, I^h(\tilde{\theta}_i w)\right|_{H^1(E)}^2 
    \leq C \left(\rho_E\left\|I^h(\tilde{\theta}_i) \nabla I^h(w)\right\|_{L^2(E)}^2 
    + \rho_E\left\|I^h(w) \nabla I^h(\tilde{\theta}_i)\right\|_{L^2(E)}^2 \right).
\end{equation}

Using that $I^hw=w$ and $0\leq I^h(\tilde{\theta}_i)\leq1$, we obtain 
\begin{align*}
\sum_{E \subset \Omega_i'} \rho_E&\left\|I^h(\tilde{\theta}_i)\nabla I^h(w)\right\|_{L^2(E)}^2  \leq \sum_{E \in {\cal T}_h} \rho_E\left| w\right|_{H^1(E)}^2 \\
&\leq 2\|u\|_{1,h}^2+2\sum_{i=1}^N\rho_{\Omega_i}\left|R_0^Tu_0\right|_{H^1(\Omega_i)}^2
\leq C\omega\|u\|_{1,h}^2.    
\end{align*}

Here the second inequality follows from $w=u-R_0^Tu_0$ and the triangle inequality, while the third one follows from \cref{h1u0} together with $\sum_{i}\rho_{\Omega_i}|\chi|^2_{H^1(\Omega_i)}\leq C\|u\|^2_{1,h}$, which is a consequence of \cref{sumh1}.  

For the second term in \cref{first_term}, let $\Xi_i:=\{j:\overline{\Omega_j}\cap\overline{\Omega_i}\neq\emptyset\}$, so that $i\in\Xi_i$. Then, $\tilde{\theta}_i$ equals one on $\Omega_i\setminus\bigcup_{j\neq i}\Omega'_j$ and vanishes outside $\Omega'_i$. Hence, $\nabla I^h(\tilde{\theta}_i)$ vanishes outside a neighbourhood of $\partial\Omega_i$ of width $\max_{j\in\Xi_i}\delta_j$ and $\|\nabla I^h(\tilde\theta_i)\|_{L^\infty}\leq C/\delta_i$. Since the number of sets $\Omega_j'$ that intersect $\Omega_i$ is uniformly bounded, it suffices to estimate the contribution of a single $j$. We split 
\begin{equation}\label{eq:splitW}
   w = \underbrace{(u-\chi)}_{=:w_1} +\underbrace{(\chi-u_0)}_{=:w_2} +\underbrace{(u_0-R_0^Tu_0)}_{=:w_3},
\end{equation}
and bound the three contributions separately.

For $w_1$, the elements meeting $\Omega'_j\cap\Omega_i$ lie in a layer of width $C\delta_i$, so by the local form of \cref{sumL2} and $h_e\leq h_i\leq\delta_i$,
\begin{align}
  \frac{\rho_{\Omega_i}}{\delta_i^2}\|w_1\|^2_{L^2(\Omega'_j\cap\Omega_i)} &\leq\frac{C}{\delta_i^2}\sum_{e}\rho_eh_e\|[u]\|^2_{L^2(e)} = C\sum_{e}\frac{h_e^2}{\delta_i^2}\frac{\rho_e\|[u]\|^2_{L^2(e)}}{h_e} \nonumber \\
  &\leq C\sum_{e}\frac{\rho_e}{|e|}\|[u]\|^2_{L^2(e)},\label{eq:w1}
\end{align}
the sums being over the edges of the elements meeting $\Omega'_j\cap\Omega_i$.

For $w_2$, let $\hat{\chi}$ be the mean value of $\chi$ over $\Omega_i$. Since $u_0$ is harmonic in $\Omega_i$ and its degrees of freedom are uniformly bounded by the nodal values of $\chi$, we have $\|w_2\|_{L^\infty(\Omega_i)}\leq C\|\chi-\hat{\chi}\|_{L^\infty(\Omega_i)}$, and \cref{lem:discreteSobolev} together with $|\Omega'_j\cap\Omega_i|\leq CH_i\delta_i$ gives
\begin{equation}\label{eq:w2}
  \frac{\rho_{\Omega_i}}{\delta_i^2}\|w_2\|^2_{L^2(\Omega'_j\cap\Omega_i)} \leq\frac{C\rho_{\Omega_i}}{\delta_i^2}\|\chi-\hat{\chi}\|^2_{L^\infty(\Omega_i)}\, |\Omega'_j\cap\Omega_i| \leq C\,\frac{H_i}{\delta_i}\,\omega_i\,\rho_{\Omega_i}|\chi|^2_{H^1(\Omega_i)} .
\end{equation}
This is the only term that produces the factor $H/\delta$.

For $w_3$, we use a similar argument as in \cite[proof of Theorem 4.1]{CalvoJ}. By the definition of $R_0^T$ we have $R_0^Tu_0=u_0$ on $\partial\Omega_i$, so that $w_3$ vanishes on $\partial\Omega_i$. Cover $\Omega'_j\cap\Omega_i$ with square patches $\pi_1,\ldots,\pi_L$ of side of order $\delta_i$ and with bounded overlap; since the region is a layer of width $C\delta_i$ along $\partial\Omega_i$, $L$ is of order $H_i/\delta_i$ and every $\pi_k$ meets $\partial\Omega_i$. Friedrichs' inequality on $\pi_k$, applicable because $w_3$ vanishes on $\partial\pi_k\cap\partial\Omega_i$, yields $\|w_3\|^2_{L^2(\pi_k)}\leq C\delta_i^2|w_3|^2_{H^1(\pi_k)}$, whence
\begin{equation}\label{eq:w3}
  \frac{\rho_{\Omega_i}}{\delta_i^2}\|w_3\|^2_{L^2(\Omega'_j\cap\Omega_i)} \leq\frac{C\rho_{\Omega_i}}{\delta_i^2}\sum_{k=1}^L\|w_3\|^2_{L^2(\pi_k)} \leq C\rho_{\Omega_i}\sum_{k=1}^L|w_3|^2_{H^1(\pi_k)} \leq C\rho_{\Omega_i}|w_3|^2_{H^1(\Omega_i)},
\end{equation}
and $|w_3|^2_{H^1(\Omega_i)}\leq 2|\chi-u_0|^2_{H^1(\Omega_i)}+2|\chi-R_0^Tu_0|^2_{H^1(\Omega_i)}\leq C\omega_i|\chi|^2_{H^1(\Omega_i)}$ by \cref{h1u0}.

On the other hand, the second summation in \cref{local_bi} is handled by \cref{lem:jumpInterp}. After multiplying \cref{eq:jumpInterpSum} by $\rho_e/|e|$ and summing over all edges,
\begin{equation}\label{eq:jumpsAssembled}
  \sum_{i=1}^N\sum_{e\in\mathcal{E}_h} \frac{\rho_e}{|e|}\bigl\|[I^h(\tilde{\theta}_i w)]\bigr\|^2_{L^2(e)} \;\leq\; 3\sum_{e\in\mathcal{E}_h}\frac{\rho_e}{|e|}\bigl\|[w]\bigr\|^2_{L^2(e)}=3\sum_{e}\frac{\rho_e}{|e|}\|[u]\|^2_{L^2(e)}\leq3\|u\|^2_{1,h},
\end{equation}
where we used the fact that $[w]=[u]$ since $R_0^Tu_0$ is continuous on $\Omega$ and vanishes on $\partial\Omega$.

Collecting \cref{eq:w1}, \cref{eq:w2}, \cref{eq:w3} and \cref{eq:jumpsAssembled}, summing over $i$ and using that $$\sum_i\rho_{\Omega_i}|\chi|^2_{H^1(\Omega_i)}\leq C\|u\|^2_{1,h}$$ by \cref{sumh1}, together with the bounded overlap of the sets $\Omega_i'$, we obtain \cref{eq:C0} with $C_0^2\leq C\omega(1+H/\delta)$, and the proof is complete.

\begin{proof}[Proof of \cref{main_thm}]
From the abstract additive Schwarz theory \cite[Theorem 2.7]{toselli2006domain}, we have that 
$$C_0^{-2}\,B_h(u,u)\;\leq\;B_h(P_{ad}u,u)\;\leq\; \bigl(N^C+1\bigr)\,B_h(u,u)\qquad\forall\,u\in V_h,$$
where $N^C$ is the minimum number of colors needed to paint the overlapping subdomains $\Omega_i'$ such that no pair of subdomains of the same color intersect. Therefore $\kappa(B_{ad}^{-1}B)\leq C_0^2(N^C+1)$. The conclusion follows from standard arguments; see \cite[Theorem 2.7, Lemma 2.10]{toselli2006domain}.
\end{proof}

\end{proof}

\section{Experimental results}\label{section:experiments}
In this section we report numerical results for the discretization of \Cref{section:DGM+VEM} and for the two-level additive preconditioner of \Cref{section:2DOS}. All the experiments are for the model problem \cref{problemToSolve} on the unit square $\Omega=(0,1)^2$. \Cref{sec:convergence} checks that the discretization converges at the optimal rate and the remaining subsections study the preconditioner. The preconditioned systems are solved with the preconditioned conjugate gradient method, starting from a zero initial guess and stopping when the relative residual is below $10^{-6}$. We use the penalty parameter $\gamma_h=10$ and the projector $\Pi^{\nabla}_{\Omega_i,k}$ with $k\in\{2,4\}$ in the definition of the extension operator $R_0^T$, and harmonic extensions as well.

We consider three families of fine meshes: structured triangular meshes, hexagonal meshes and Voronoi meshes, and two families of subdomain partitions. In the first one, $\Omega$ is divided into $N$ congruent square subdomains; this is the situation covered by the classical theory, and we include it as a reference. In the second one, the subdomains are generated with the graph partitioner METIS, so that they are unions of fine elements with no prescribed shape. In all the tables, $\dim V_0$ denotes the dimension of the coarse space, $I$ the number of PCG iterations and $\kappa$ the condition number of the preconditioned system.

\subsection{Convergence of the discontinuous virtual element method} \label{sec:convergence}

We first check that the discretization of \Cref{section:DGM+VEM} attains optimal convergence rates on three families of meshes. We solve \cref{problemToSolve} with homogeneous Dirichlet boundary conditions for the following values of $\rho$:
\begin{enumerate}[(i)]
    \item[(i)] $\rho\equiv1$,
    \item[(ii)] $\rho = 1+10xy$,
    \item[(iii)] $\rho = 2+\cos(10\pi x)\cos(10\pi y)$, and
    \item[(iv-v)] $\rho = 1$ for $x<1/2$ and $\rho = 10$ for $x>1/2$.
\end{enumerate}
For the cases (i-iv), the exact solution is $u(x,y)=\sin(\pi x)\sin(\pi y)$ and $f$ is computed accordingly. In case (iv) this function solves the interface problem because $\partial_x u$ vanishes on $x=1/2$ and therefore so does the jump of the normal flux. Case (v) is included as a sharper test: there we take $u(x,y)=\varphi(x)\sin(\pi y)$ with
\begin{equation*}
    \varphi(x) = \begin{cases}
        x, & 0\leq x\leq 1/2,\\
        \displaystyle \frac{11}{5}\left(x-\dfrac{1}{2}\right)^2+\dfrac{1}{10}\left(x-\dfrac{1}{2}\right)+\dfrac{1}{2}, & 1/2 < x \leq 1,
    \end{cases}
\end{equation*}
so that $\varphi$ and $\rho\varphi'$ are continuous across $x=1/2$, $\varphi(0)=\varphi(1)=0$, and the normal flux there equals $\sin(\pi y)\neq0$. We take $\gamma_h=10$.

Since the analysis assumes that $\rho$ is constant on each element, for cases (iv) and (v) we cut the elements accordingly. We report the relative errors
\begin{equation*}
    \frac{\|u-\Pi_h^{\nabla}u_h\|_{L^2(\Omega)}}{\|u\|_{L^2(\Omega)}}
    \qquad\text{and}\qquad
    \frac{\bigl(\sum_{E\in\mathcal{T}_h}\|\nabla u-\nabla\Pi^{\nabla}_{E,1}u_h\|^2_{L^2(E)}\bigr)^{1/2}}{|u|_{H^1(\Omega)}}.
\end{equation*}

\Cref{fig:coefficients} shows the last three piecewise constant coefficients and \cref{fig:solutions} displays the corresponding discrete solutions on a coarse mesh of each family. The last column of \cref{fig:solutions} is the cut of case (v) along $y=1/2$ to show that the kink of the exact solution at the interface.

\begin{figure}[H]
    \centering
    \includegraphics[width=\linewidth]{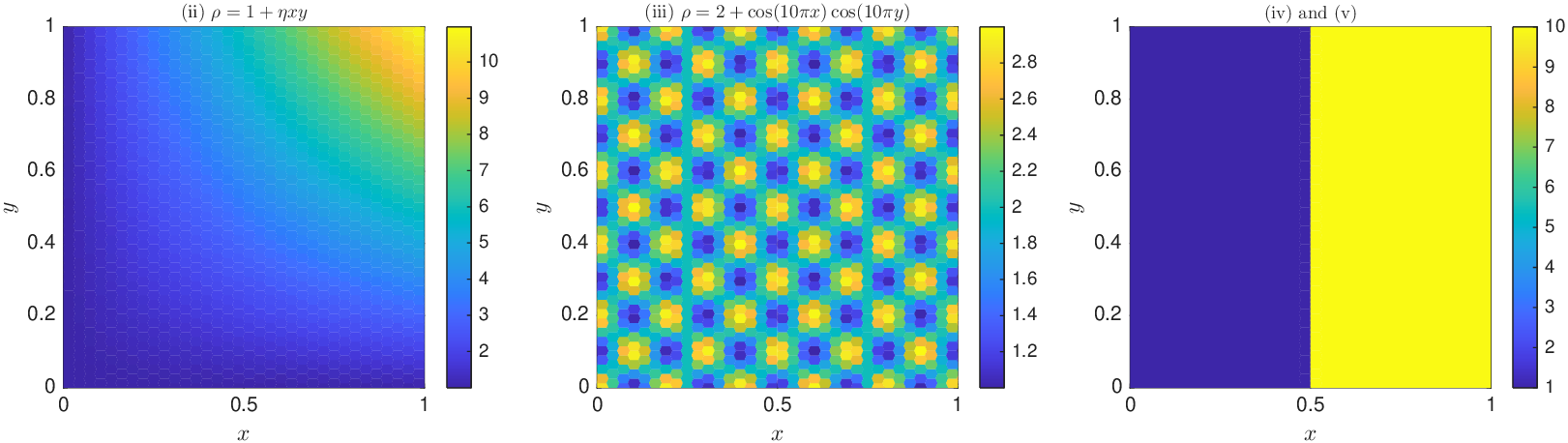}
    \caption{The coefficients (ii), (iii) and (iv)-(v) of \Cref{sec:convergence}. Cases (iv) and (v) share the same coefficient and case (i) is constant.}
    \label{fig:coefficients}
\end{figure}

\begin{figure}[h!]
    \centering
    \includegraphics[width=\linewidth]{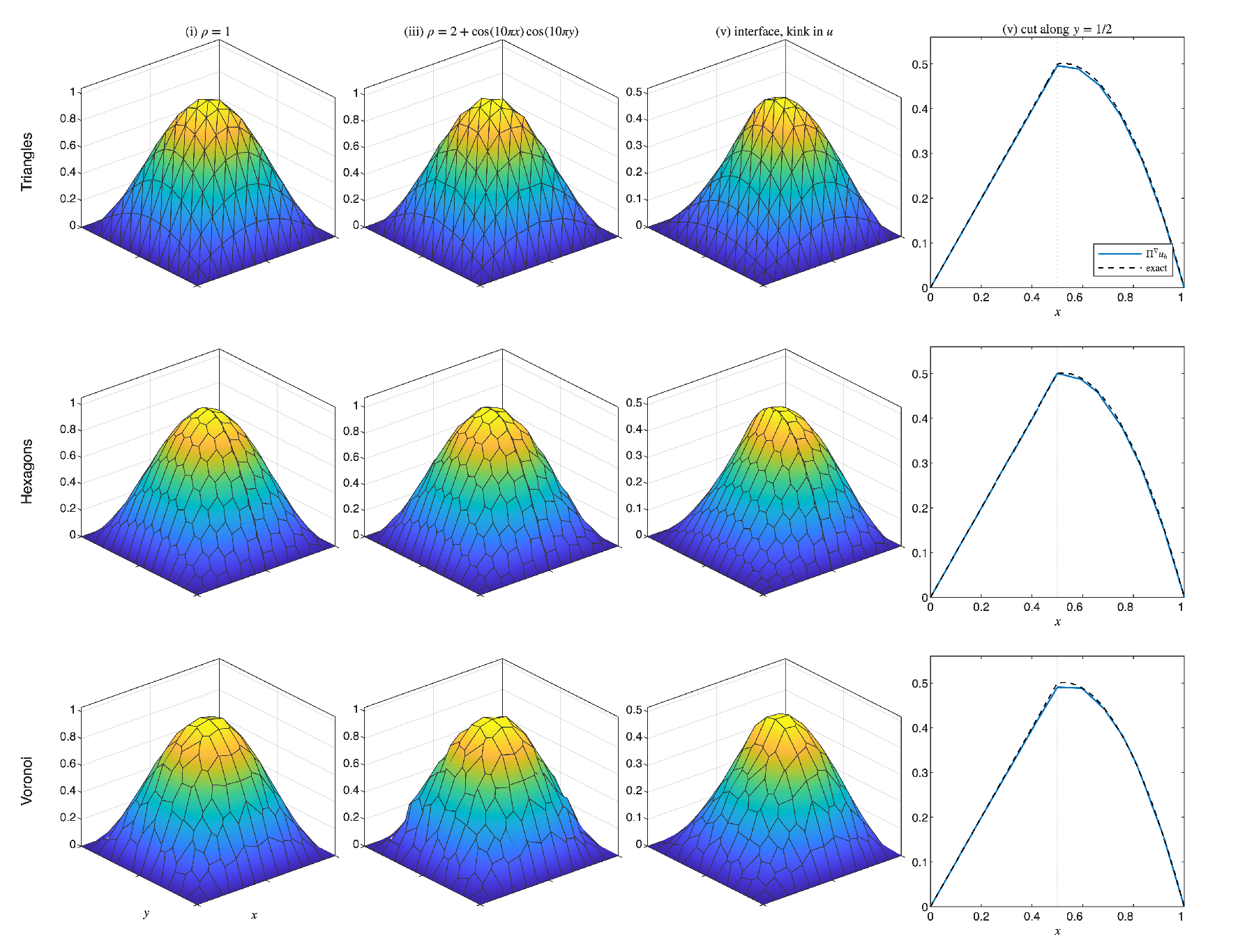}
    \caption{Discrete solution $u_h$ for the coefficients (i), (iii) and (v) of \cref{sec:convergence} on a mesh of each family: $288$ triangles with $h\approx 0.118$, $232$ hexagons with $h\approx 0.099$ and $144$ Voronoi cells with $h\approx 0.137$. Cases (ii) and (iv) are omitted since they share the exact solution of case (i). The rightmost column is the cut of case (v) along $y=1/2$.}
    \label{fig:solutions}
\end{figure}

\Cref{fig:convergence} shows the convergence of the method. Errors decay as $h^2$ in the $L^2$ norm and as $h$ in the broken $H^1$ seminorm, uniformly in the family of meshes and in the choice of the coefficient. 

\begin{figure}[h!]
    \centering
    \includegraphics[width=\linewidth]{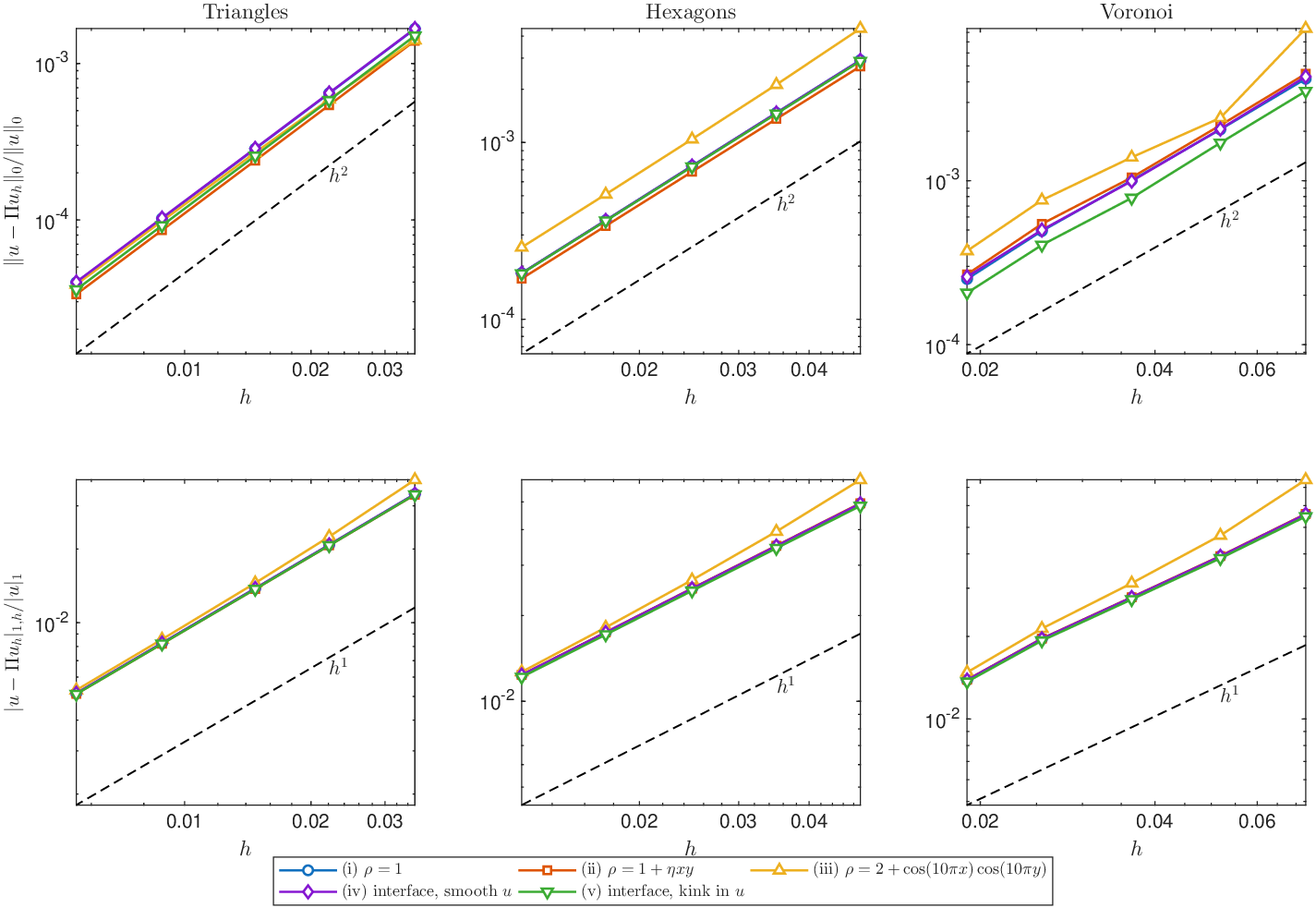}
    \caption{Relative errors in the $L^2$ norm (top) and in the broken $H^1$ seminorm (bottom) as a function of $h$, for the three families of meshes and the five coefficients of \cref{sec:convergence} with $\gamma_h=10$. The dashed lines have slopes $2$ and $1$.}
    \label{fig:convergence}
\end{figure}

\subsection{Preconditioner for the case of a constant coefficient}

We first set $\rho\equiv1$ to facilitate comparison with previous studies, such as \cite{Feng-Karakashian} for triangular meshes, and to assess the effect of different mesh geometries. \Cref{tab:my-table1} reports the results for structured triangular meshes as a function of the number of subdomains, while \cref{tab:my-table2} reports similar results for hexagonal and Voronoi meshes. 

\begin{table}[H]
\centering
\setlength{\tabcolsep}{3pt}
\begin{tabular}{|c|cc|ccc|ccc|}
\hline
& \multirow{2}{*}{$N$} & \multirow{2}{*}{$\dim V_0$} & \multicolumn{3}{c|}{$H/\delta = 16$} & \multicolumn{3}{c|}{$H/\delta = 4$} \\ \cline{4-9}
& & & $I_{\mathcal{H}}$ & $I_2$ & $I_4$ & $I_{\mathcal{H}}$ & $I_2$ & $I_4$ \\ \hline
\multirow{4}{*}{\rotatebox{90}{Square}}
& 64  & 49  & 20 (14.9) & 20 (13.5) & & 15 (6.2) & 15 (6.2) & \\ \cline{2-9}
& 144 & 121 & 20 (14.1) & 20 (13.5) & & 14 (6.0) & 15 (6.1) & \\ \cline{2-9}
& 256 & 225 & 20 (13.5) & 19 (14.4) & & 14 (5.8) & 15 (6.0) & \\ \cline{2-9}
& 400 & 361 & 18 (13.8) & 18 (15.0) & & 14 (5.7) & 15 (6.0) & \\ \hline
\multirow{4}{*}{\rotatebox{90}{METIS}}
& 64  & 97  & 37 (20.3) & 38 (20.4) & 37 (19.8) & 25 (9.3)  & 27 (11.0) & 25 (9.6)  \\ \cline{2-9}
& 144 & 243 & 40 (25.0) & 40 (23.5) & 40 (25.4) & 26 (9.8)  & 27 (10.3) & 26 (10.0) \\ \cline{2-9}
& 256 & 447 & 40 (24.8) & 40 (22.7) & 40 (23.5) & 26 (11.0) & 28 (12.5) & 27 (11.2) \\ \cline{2-9}
& 400 & 711 & 48 (38.6) & 48 (37.7) & 49 (38.3) & 29 (13.2) & 30 (14.2) & 29 (13.5) \\ \hline
\end{tabular}
\caption{Condition number ($\kappa$) for the preconditioned system $B_{ad}^{-1} B$ and number of PGC iterations ($I$) as a function of the number of square and irregular subdomains $N$ for triangular structured meshes with $\rho = 1$; $\dim V_0$ is the coarse space dimension and $H/h = 16$. The case $H/\delta = 16$ corresponds to the minimal overlap case $\delta = h$. The subscript records the extension used in $R_0^T$: $I_{\mathcal{H}}$ is the discrete harmonic extension, and $I_k$ the projector $\Pi^{\nabla}_{\Omega_i,k}$. Only $I_2$ is reported for the square subdomains: there the coarse elements are squares and the boundary data is piecewise linear, and both $k=2$ and $k=4$ recover the same coarse space.}
\label{tab:my-table1}
\end{table}

\begin{table}[H]
\centering
\setlength{\tabcolsep}{3pt}
\begin{tabular}{|c|cc|ccc|ccc|}
\hline
& \multirow{2}{*}{$N$} & \multirow{2}{*}{$\dim V_0$} & \multicolumn{3}{c|}{$H/\delta = 16$} & \multicolumn{3}{c|}{$H/\delta = 4$} \\ \cline{4-9}
& & & $I_{\mathcal{H}}$ & $I_2$ & $I_4$ & $I_{\mathcal{H}}$ & $I_2$ & $I_4$ \\ \hline
\multirow{4}{*}{\rotatebox{90}{Hexagonal}}
& 64  & 97  & 30 (18.3) & 31 (15.1) & 30 (16.0) & 21 (7.5) & 22 (7.9) & 21 (7.7) \\ \cline{2-9}
& 144 & 245 & 33 (18.7) & 32 (16.4) & 32 (16.8) & 21 (7.2) & 22 (7.2) & 21 (7.2) \\ \cline{2-9}
& 256 & 452 & 33 (20.4) & 33 (17.5) & 33 (18.5) & 22 (7.8) & 24 (8.8) & 23 (7.9) \\ \cline{2-9}
& 400 & 729 & 36 (24.8) & 36 (22.4) & 35 (22.9) & 23 (9.2) & 24 (9.4) & 24 (9.2) \\ \hline
\multirow{4}{*}{\rotatebox{90}{Voronoi}}
& 64  & 96  & 36 (24.1) & 39 (22.7) & 37 (20.5) & 27 (10.7) & 27 (10.5) & 27 (11.4) \\ \cline{2-9}
& 144 & 241 & 39 (26.7) & 39 (21.4) & 39 (22.6) & 29 (13.0) & 30 (13.8) & 29 (12.9) \\ \cline{2-9}
& 257 & 452 & 41 (28.1) & 41 (24.4) & 42 (25.5) & 30 (14.2) & 31 (13.8) & 31 (14.1) \\ \cline{2-9}
& 403 & 730 & 44 (30.6) & 44 (28.9) & 44 (28.4) & 30 (13.9) & 32 (14.3) & 31 (13.7) \\ \hline
\end{tabular}
\caption{Condition number ($\kappa$) for the preconditioned system $B_{ad}^{-1} B$ and number of PGC iterations ($I$) as a function of the number of irregular subdomains $N$ for hexagonal and Voronoi meshes with $\rho = 1$; $\dim V_0$ is the coarse space dimension and $H/h = 16$. The subscript records the extension used in $R_0^T$: $I_{\mathcal{H}}$ is the discrete harmonic extension, and $I_k$ the projector $\Pi^{\nabla}_{\Omega_i,k}$. 
}
\label{tab:my-table2}
\end{table}

\Cref{tab:discV0} reports the results obtained with the discontinuous reduced coarse space $V_0^{R,D}$ of \cref{rem:discreteCoarse}, for the same configuration as the last column of \cref{tab:my-table1} with no significant differences but with a coarse space three to four times larger. This comparison led us to concentrate on the continuous space $V_0^{R,C}$. We also show results for variations on $H/h$ and $H/\delta$; see \cref{tab:Hh_tria_rho0} and \cref{fig:enter-label}.

\begin{table}[H]
\centering
\begin{tabular}{|c|cc|cc|}
\hline
\multirow{2}{*}{\centering $N$} & \multicolumn{2}{c|}{Square subdomains} & \multicolumn{2}{c|}{METIS subdomains} \\ \cline{2-5} 
 & \multicolumn{1}{c|}{$\dim V_0$} & \multicolumn{1}{c|}{$I (\kappa)$} & \multicolumn{1}{c|}{$\dim V_0$} & \multicolumn{1}{c|}{$I (\kappa)$} \\ \hline
64 & \multicolumn{1}{c|}{196} & \multicolumn{1}{c|}{17 (5.9)} & \multicolumn{1}{c|}{291} & \multicolumn{1}{c|}{26 (9.9)} \\ \hline
144 & \multicolumn{1}{c|}{484} & \multicolumn{1}{c|}{16 (5.8)} & \multicolumn{1}{c|}{729} & \multicolumn{1}{c|}{27 (11.1)}  \\ \hline
256 & \multicolumn{1}{c|}{900} & \multicolumn{1}{c|}{15 (5.6)} & \multicolumn{1}{c|}{1341} & \multicolumn{1}{c|}{28 (12.0)}  \\ \hline
400 & \multicolumn{1}{c|}{1444} & \multicolumn{1}{c|}{15 (5.6)} &  \multicolumn{1}{c|}{2134} & \multicolumn{1}{c|}{31 (15.0)}  \\ \hline
\end{tabular}
\caption{Condition number ($\kappa$) for the preconditioned system $B_{ad}^{-1} B$ and number of PGC iterations ($I$) as a function of the number of square and irregular subdomains $N$ for triangular structured meshes with $\rho = 1$; $\dim V_0$ is the coarse space dimension, $H/h = 16$ and $H/\delta = 4$. In this case we used the discontinuous reduced coarse space $V_0^{R,D}$ of \cref{rem:discreteCoarse}; compare with \cref{tab:my-table1}. This comparison was carried out only with the discrete harmonic extension in $R_0^T$.}
\label{tab:discV0}
\end{table}

\begin{table}[H]
\centering
\setlength{\tabcolsep}{3pt}
\begin{tabular}{|c|ccc|cccc|}
\hline
\multirow{2}{*}{$H/h$} & \multicolumn{3}{c|}{Square subdomains} & \multicolumn{4}{c|}{METIS subdomains} \\ \cline{2-8}
 & $\dim V_0$ & $I_{\mathcal{H}}$ & $I_2$ & $\dim V_0$ & $I_{\mathcal{H}}$ & $I_2$ & $I_4$ \\ \hline
12 & 49 & 15 (6.05) & 15 (5.95) & 97 & 27 (10.70) & 27 (10.94) & 27 (10.50) \\ \hline
16 & 49 & 15 (6.24) & 15 (6.23) & 97 & 25 (9.28)  & 27 (10.95) & 25 (9.56)  \\ \hline
20 & 49 & 15 (6.30) & 15 (6.29) & 93 & 24 (8.73)  & 26 (10.02) & 25 (9.04)  \\ \hline
24 & 49 & 15 (6.35) & 15 (6.34) & 97 & 27 (12.58) & 29 (14.71) & 28 (12.91) \\ \hline
28 & 49 & 15 (6.39) & 15 (6.38) & 96 & 26 (9.84)  & 29 (12.05) & 27 (10.22) \\ \hline
32 & 49 & 15 (6.43) & 16 (6.42) & 95 & 26 (10.56) & 30 (13.08) & 27 (10.99) \\ \hline
\end{tabular}
\caption{Condition number ($\kappa$) for the preconditioned system $B_{ad}^{-1} B$ and number of PGC iterations ($I$) as a function of $H/h$, for triangular structured meshes with $\rho = 1$; $\dim V_0$ is the coarse space dimension, $N=64$ and $H/\delta = 4$. The subscript records the extension used in $R_0^T$: $I_{\mathcal{H}}$ is the discrete harmonic extension, and $I_k$ the projector $\Pi^{\nabla}_{\Omega_i,k}$. Only $I_2$ is reported for the square subdomains, as in \cref{tab:my-table1}.}
\label{tab:Hh_tria_rho0}
\end{table}

\begin{figure}[H]
    \centering
    \includegraphics[width=0.48\linewidth]{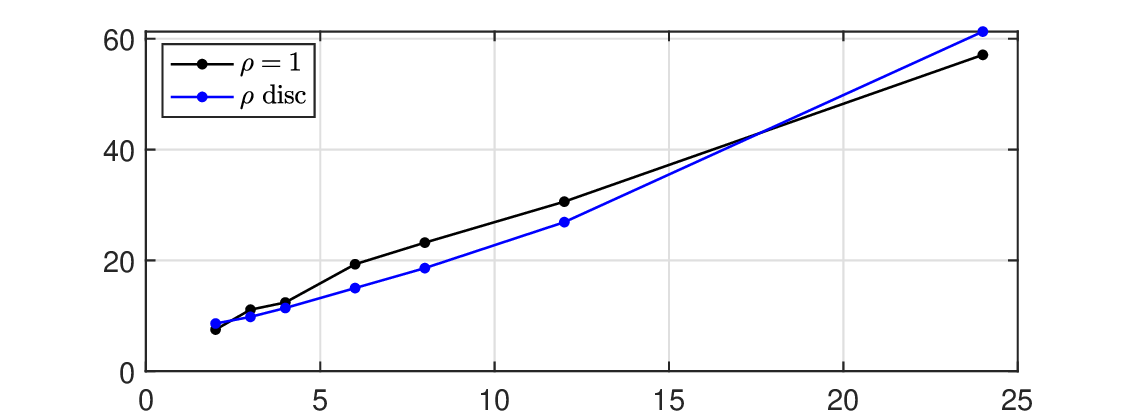}
    \includegraphics[width=0.48\linewidth]{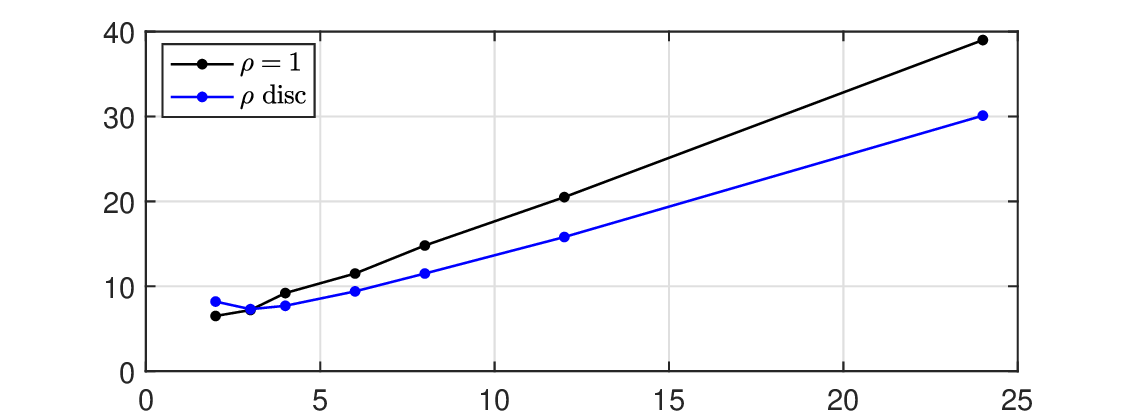}
    \caption{Dependence of the condition number of the preconditioned system for (left) triangular and (right) hexagonal meshes, with irregular subdomains. We use $H/h=24$, $N=64$. }
    \label{fig:enter-label}
\end{figure}

\subsection{Piecewise constant coefficient}\label{sec:rhodisc}
We now let $\rho$ be constant on each subdomain, with its value sampled independently from a log-uniform distribution on $[1,10^6]$. Thus, up to sampling fluctuations, the coefficient contrast is $10^6$. The results are reported in \cref{tab:my-table3} for triangular meshes, in \cref{tab:my-table4} for hexagonal and Voronoi meshes, and in \cref{tab:Hh_tria_rho1} as a function of $H/h$.

For the METIS partitions, as well as for the hexagonal and Voronoi meshes, the condition numbers remain of the same order as those obtained with $\rho\equiv1$ and are essentially independent of $N$, in agreement with \cref{main_thm}. In contrast, for the square partitions of the triangular meshes, the condition numbers increase steadily with the number of subdomains. This behavior can be attributed to the failure of the quasi-monotonicity condition on these partitions. Indeed, at an interior vertex of a structured partition, four subdomains meet, and Assumption \ref{ass:monotonoCoef} fails whenever coefficient has a configuration as described in \cref{rem:qmSharp}. On the other hand, the dependence on $H/h$ shown in \cref{tab:Hh_tria_rho1} remains essentially flat for both types of partition, as was also observed for $\rho\equiv1$.

\begin{table}[h!]
\centering
\setlength{\tabcolsep}{3pt}
\begin{tabular}{|c|cc|ccc|ccc|}
\hline
& \multirow{2}{*}{$N$} & \multirow{2}{*}{$\dim V_0$} & \multicolumn{3}{c|}{$H/\delta = 16$} & \multicolumn{3}{c|}{$H/\delta = 4$} \\ \cline{4-9}
& & & $I_{\mathcal{H}}$ & $I_2$ & $I_4$ & $I_{\mathcal{H}}$ & $I_2$ & $I_4$ \\ \hline
\multirow{4}{*}{\rotatebox{90}{Square}}
& 64  & 49  & 33 (22.2)  & 33 (22.2)  & & 26 (19.5)  & 26 (19.5)  & \\ \cline{2-9}
& 144 & 121 & 52 (57.6)  & 52 (57.6)  & & 46 (50.9)  & 46 (50.9)  & \\ \cline{2-9}
& 256 & 225 & 85 (270.4) & 85 (270.5) & & 73 (210.6) & 73 (210.8) & \\ \cline{2-9}
& 400 & 361 & 107 (432.5)& 107 (432.6)& & 97 (351.7) & 97 (351.8) & \\ \hline
\multirow{4}{*}{\rotatebox{90}{METIS}}
& 64  & 97  & 36 (18.4) & 36 (17.9) & 36 (18.9) & 26 (8.1)  & 28 (9.7)  & 26 (8.2)  \\ \cline{2-9}
& 144 & 243 & 37 (17.9) & 38 (17.8) & 37 (17.8) & 28 (10.8) & 31 (13.1) & 29 (11.2) \\ \cline{2-9}
& 257 & 447 & 45 (27.4) & 47 (27.0) & 46 (27.5) & 31 (10.9) & 35 (13.2) & 31 (11.6) \\ \cline{2-9}
& 400 & 711 & 49 (25.7) & 51 (26.0) & 49 (25.8) & 30 (11.0) & 34 (12.7) & 31 (11.0) \\ \hline
\end{tabular}
\caption{Condition number ($\kappa$) for the preconditioned system $B_{ad}^{-1} B$ and number of PGC iterations ($I$) as a function of the number of square and irregular subdomains $N$ for triangular structured meshes with the piecewise constant coefficient of \cref{sec:rhodisc}; $\dim V_0$ is the coarse space dimension and $H/h = 16$. The case $H/\delta = 16$ corresponds to the minimal overlap case $\delta = h$. The subscript records the extension used in $R_0^T$: $I_{\mathcal{H}}$ is the discrete harmonic extension, and $I_k$ the projector $\Pi^{\nabla}_{\Omega_i,k}$.}
\label{tab:my-table3}
\end{table}

\begin{table}[h!]
\centering
\setlength{\tabcolsep}{3pt}
\begin{tabular}{|c|cc|ccc|ccc|}
\hline
& \multirow{2}{*}{$N$} & \multirow{2}{*}{$\dim V_0$} & \multicolumn{3}{c|}{$H/\delta = 16$} & \multicolumn{3}{c|}{$H/\delta = 4$} \\ \cline{4-9}
& & & $I_{\mathcal{H}}$ & $I_2$ & $I_4$ & $I_{\mathcal{H}}$ & $I_2$ & $I_4$ \\ \hline
\multirow{4}{*}{\rotatebox{90}{Hexagonal}}
& 64  & 97  & 30 (15.0) & 31 (14.9) & 30 (15.0) & 23 (7.6) & 23 (7.6) & 24 (7.6) \\ \cline{2-9}
& 144 & 245 & 37 (16.7) & 37 (16.5) & 37 (16.5) & 25 (7.4) & 25 (7.3) & 25 (7.3) \\ \cline{2-9}
& 256 & 452 & 41 (20.6) & 42 (20.0) & 41 (20.5) & 27 (9.1) & 28 (9.4) & 27 (9.1) \\ \cline{2-9}
& 400 & 729 & 39 (20.7) & 40 (20.3) & 39 (20.4) & 27 (8.5) & 29 (9.3) & 27 (8.3) \\ \hline
\multirow{4}{*}{\rotatebox{90}{Voronoi}}
& 64  & 96  & 40 (20.1) & 41 (20.5) & 41 (20.4) & 29 (9.9)  & 30 (11.2) & 29 (10.0) \\ \cline{2-9}
& 144 & 241 & 43 (21.5) & 45 (22.8) & 43 (21.9) & 31 (12.6) & 34 (13.6) & 32 (12.9) \\ \cline{2-9}
& 257 & 452 & 46 (21.8) & 46 (22.1) & 45 (21.9) & 34 (13.4) & 37 (18.4) & 34 (13.5) \\ \cline{2-9}
& 403 & 730 & 47 (28.1) & 51 (29.5) & 48 (28.5) & 36 (14.9) & 45 (23.5) & 38 (15.8) \\ \hline
\end{tabular}
\caption{Condition number ($\kappa$) for the preconditioned system $B_{ad}^{-1} B$ and number of PGC iterations ($I$) as a function of the number of irregular subdomains $N$ for hexagonal and Voronoi meshes with the piecewise constant coefficient of \cref{sec:rhodisc}; $\dim V_0$ is the coarse space dimension and $H/h = 16$. The subscript records the extension used in $R_0^T$, as in \cref{tab:my-table3}.}
\label{tab:my-table4}
\end{table}

\begin{table}[h!]
\centering
\setlength{\tabcolsep}{3pt}
\begin{tabular}{|c|ccc|cccc|}
\hline
\multirow{2}{*}{$H/h$} & \multicolumn{3}{c|}{Square subdomains} & \multicolumn{4}{c|}{METIS subdomains} \\ \cline{2-8}
 & $\dim V_0$ & $I_{\mathcal{H}}$ & $I_2$ & $\dim V_0$ & $I_{\mathcal{H}}$ & $I_2$ & $I_4$ \\ \hline
12 & 49 & 26 (19.6) & 26 (19.7) & 97 & 26 (9.6)  & 29 (11.4) & 27 (9.8)  \\ \hline
16 & 49 & 26 (19.5) & 26 (19.5) & 97 & 26 (8.1)  & 28 (9.7)  & 26 (8.2)  \\ \hline
20 & 49 & 26 (19.5) & 26 (19.5) & 93 & 25 (7.9)  & 28 (9.5)  & 25 (8.0)  \\ \hline
24 & 49 & 26 (19.4) & 26 (19.4) & 97 & 26 (11.9) & 30 (13.6) & 27 (12.3) \\ \hline
28 & 49 & 26 (19.3) & 26 (19.3) & 96 & 28 (9.8)  & 31 (11.7) & 29 (10.0) \\ \hline
32 & 49 & 26 (19.3) & 26 (19.3) & 95 & 28 (9.9)  & 32 (11.4) & 29 (10.2) \\ \hline
\end{tabular}
\caption{Condition number ($\kappa$) for the preconditioned system $B_{ad}^{-1} B$ and number of PGC iterations ($I$) as a function of $H/h$, for triangular structured meshes with the piecewise constant coefficient of \cref{sec:rhodisc}; $\dim V_0$ is the coarse space dimension, $N=64$ and $H/\delta = 4$. The subscript records the extension used in $R_0^T$, as in \cref{tab:my-table3}.}
\label{tab:Hh_tria_rho1}
\end{table}

\section{Conclusions and final remarks} \label{section:conclusions}
We have introduced and analyzed a two-level overlapping additive Schwarz preconditioner for discontinuous Galerkin discretizations of virtual element spaces on general polygonal meshes. The coarse space is built from a virtual, continuous partition of unity with a single degree of freedom per subdomain vertex. The bound obtained in \cref{main_thm} is valid with a constant $C$ independent of $H$, $h$, $\delta$, of the number of subdomains and of the contrast of the coefficient. This is the same bound that is known for the conforming virtual element case \cite{CalvoJ}, so that the discontinuous discretization does not degrade the performance of the preconditioner.

Two remarks are worth making. First, robustness with respect to the contrast is obtained by using $\rho$-weighted averages twice: in the discrete formulation, where they also restore the consistency of the method, and in the construction of the continuous function $\chi$ of \cref{lemmachi}, which is the technical tool that transfers estimates from the discontinuous space to the conforming one. Second, the whole role of the quasi-monotonicity \textcolor{red}{\cref{ass:monotonoCoef}} is confined to a single spectral estimate. \textcolor{red}{\cref{N_term2}} shows that the constant relating the weighted variance of the nodal values around a node to the weighted jumps across the edges of its patch is, for every patch and every coefficient, the reciprocal of the spectral gap $\lambda_2(L_\rho,D)$ of a local generalized eigenvalue problem. Thus, \textcolor{red}{\cref{ass:monotonoCoef}} is a sufficient condition for that gap to be bounded below by $1/m^2$, independently of the coefficient. The assumption cannot be removed: for the checkerboard configuration of \cref{rem:qmSharp} shows that the optimal constant equals exactly $(M+1)/4$, where $M$ is the contrast.

Some questions remain open. The discontinuous reduced coarse space $V_0^{R,D}$ of \cref{rem:discreteCoarse} is not covered by our analysis, since the coarse component of the stable decomposition relies on the continuity of $R_0^Tu_0$; our single numerical comparison suggests that it is not worth pursuing, but a proof of a comparable bound would be of interest. Finally, the extension of the construction to three dimensions and to higher-order discontinuous virtual element spaces, is left for future work.

\section*{Acknowledgments}
J.G.C. gratefully acknowledges the institutional support for project C5054 provided by the \textit{Vicerrectoría de Investigación, Universidad de Costa Rica}, through the Research Center in Pure and Applied Mathematics (CIMPA) and the School of Mathematics.

\section*{Declaration of AI-Assisted Tools}
The authors declare that Claude Opus 5 was used exclusively for grammatical proofreading, transcription of handwritten mathematical calculations, and stylistic refinement. The discretization, preconditioner design, and theoretical analysis are original contributions of the authors. The authors assume full responsibility for the accuracy, integrity, and scientific content of the manuscript.

\bibliographystyle{siamplain}
\bibliography{references}
\end{document}